\definecolor{cof}{RGB}{219,144,71}
\definecolor{pur}{RGB}{186,146,162}
\definecolor{greeo}{RGB}{91,173,69}
\definecolor{greet}{RGB}{52,111,72}
\begin{document}

\newtheorem{theorem}{Theorem}[section]
\newtheorem{corollary}[theorem]{Corollary}
\newtheorem{proposition}[theorem]{Proposition}
\newtheorem{lemma}[theorem]{Lemma}

\theoremstyle{definition}
\newtheorem{remark}[theorem]{Remark}
\newtheorem{definition}[theorem]{Definition}
\newtheorem{example}[theorem]{Example}
\newtheorem{conjecture}[theorem]{Conjecture}

\newcommand{\FFock}{\mathcal{F}}
\newcommand{\kil}{\mathsf{k}}
\newcommand{\Hil}{\mathsf{H}}
\newcommand{\hil}{\mathsf{h}}
\newcommand{\Kil}{\mathsf{K}}
\newcommand{\Real}{\mathbb{R}}
\newcommand{\Rplus}{\Real_+}

\newcommand{\bC}{{\mathbb{C}}}
\newcommand{\bD}{{\mathbb{D}}}
\newcommand{\bN}{{\mathbb{N}}}
\newcommand{\bQ}{{\mathbb{Q}}}
\newcommand{\bR}{{\mathbb{R}}}
\newcommand{\bT}{{\mathbb{T}}}
\newcommand{\bX}{{\mathbb{X}}}
\newcommand{\bZ}{{\mathbb{Z}}}
\newcommand{\bH}{{\mathbb{H}}}
\newcommand{\BH}{{\B(\H)}}
\newcommand{\bsl}{\setminus}
\newcommand{\ca}{\mathrm{C}^*}
\newcommand{\cstar}{\mathrm{C}^*}
\newcommand{\cenv}{\mathrm{C}^*_{\text{env}}}
\newcommand{\rip}{\rangle}
\newcommand{\ol}{\overline}
\newcommand{\td}{\widetilde}
\newcommand{\wh}{\widehat}
\newcommand{\sot}{\textsc{sot}}
\newcommand{\wot}{\textsc{wot}}
\newcommand{\wotclos}[1]{\ol{#1}^{\textsc{wot}}}
 \newcommand{\A}{{\mathcal{A}}}
 \newcommand{\B}{{\mathcal{B}}}
 \newcommand{\C}{{\mathcal{C}}}
 \newcommand{\D}{{\mathcal{D}}}
 \newcommand{\E}{{\mathcal{E}}}
 \newcommand{\F}{{\mathcal{F}}}
 \newcommand{\G}{{\mathcal{G}}}
\renewcommand{\H}{{\mathcal{H}}}
 \newcommand{\I}{{\mathcal{I}}}
 \newcommand{\J}{{\mathcal{J}}}
 \newcommand{\K}{{\mathcal{K}}}
\renewcommand{\L}{{\mathcal{L}}}
 \newcommand{\M}{{\mathcal{M}}}
 \newcommand{\N}{{\mathcal{N}}}
\renewcommand{\O}{{\mathcal{O}}}
\renewcommand{\P}{{\mathcal{P}}}
 \newcommand{\Q}{{\mathcal{Q}}}
 \newcommand{\R}{{\mathcal{R}}}
\renewcommand{\S}{{\mathcal{S}}}
 \newcommand{\T}{{\mathcal{T}}}
 \newcommand{\U}{{\mathcal{U}}}
 \newcommand{\V}{{\mathcal{V}}}
 \newcommand{\W}{{\mathcal{W}}}
 \newcommand{\X}{{\mathcal{X}}}
 \newcommand{\Y}{{\mathcal{Y}}}
 \newcommand{\Z}{{\mathcal{Z}}}

\newcommand{\supp}{\operatorname{supp}}
\newcommand{\conv}{\operatorname{conv}}
\newcommand{\cone}{\operatorname{cone}}
\newcommand{\vspan}{\operatorname{span}}
\newcommand{\proj}{\operatorname{proj}}
\newcommand{\sgn}{\operatorname{sgn}}
\newcommand{\rank}{\operatorname{rank}}
\newcommand{\Isom}{\operatorname{Isom}}
\newcommand{\qIsom}{\operatorname{q-Isom}}
\newcommand{\Cknet}{{\mathcal{C}_{\text{knet}}}}
\newcommand{\Ckag}{{\mathcal{C}_{\text{kag}}}}
\newcommand{\rind}{\operatorname{r-ind}}
\newcommand{\lind}{\operatorname{r-ind}}
\newcommand{\ind}{\operatorname{ind}}
\newcommand{\coker}{\operatorname{coker}}
\newcommand{\Aut}{\operatorname{Aut}}
\newcommand{\Hom}{\operatorname{Hom}}
\newcommand{\GL}{\operatorname{GL}}
\newcommand{\tr}{\operatorname{tr}}

\newcommand{\eqnwithbr}[2]{%
\refstepcounter{equation}
\begin{trivlist}
\item[]#1 \hfill $\displaystyle #2$ \hfill (\theequation)
\end{trivlist}}

\setcounter{tocdepth}{1}

\title[Maxwell-Laman counts for bar-joint frameworks in normed spaces]{Maxwell-Laman counts for bar-joint frameworks in normed spaces}

\author[D. Kitson and  B. Schulze]{D. Kitson and  B. Schulze}
\thanks{The first named author is supported by EPSRC grant  EP/J008648/1.} 
\email{d.kitson@lancaster.ac.uk, b.schulze@lancaster.ac.uk}
\address{Dept.\ Math.\ Stats.\\ Lancaster University\\
Lancaster LA1 4YF \\U.K. }

\subjclass[2010]{52C25, 20C35, 05C50}
\keywords{rigidity matrix, bar-joint framework,  infinitesimal rigidity, Minkowski space, symmetric framework}

\begin{abstract}
 The rigidity matrix is a fundamental tool for studying the infinitesimal rigidity properties of Euclidean bar-joint frameworks. In this paper we generalize this tool and introduce a rigidity matrix for  bar-joint frameworks in  arbitrary finite dimensional real normed vector spaces. Using this new matrix, we derive necessary Maxwell-Laman-type counting conditions for a well-positioned bar-joint framework in a real normed vector space to be infinitesimally rigid.
Moreover, we derive symmetry-extended  counting conditions for a bar-joint framework with a non-trivial symmetry group to be isostatic (i.e., minimally infinitesimally rigid). These conditions imply very simply stated restrictions on the number of those structural components that are fixed by the various symmetry operations of the framework. Finally, we offer some observations and conjectures regarding combinatorial characterisations of symmetric, isostatic bar-joint frameworks  in $(\bR^2,\|\cdot\|_\P)$, where the unit ball $\P$ is a quadrilateral.
\end{abstract}

\maketitle


\section{Introduction.}

In 1864, J.C. Maxwell formulated a necessary counting condition for a bar-joint framework to be (infinitesimally) rigid in $(\mathbb{R}^d,\|\cdot \|_2)$ \cite{maxwell}.
Building on Maxwell's observation, G. Laman established the first combinatorial characterisation of   rigid bar-joint frameworks which are generically placed in the Euclidean plane in 1970, thereby launching the field of combinatorial rigidity \cite{Lamanbib}. Several further equivalent characterisations of  generic rigid bar-joint frameworks in $(\mathbb{R}^2,\|\cdot \|_2)$  have been established since then (see \cite{lovyem,tay}, for example). A fundamental tool for proving these results is the rigidity matrix, whose rank, row dependencies and column dependencies completely describe  the infinitesimal rigidity properties of a framework.
Combinatorial characterisations of generic rigid bar-joint frameworks in higher dimensions have not yet been found. However, there exist significant partial results for the special classes of body-bar, body-hinge, and molecular frameworks \cite{Lamanbib}.

Very recent work has considered the infinitesimal rigidity of bar-joint frameworks in some non-Euclidean normed spaces. Specifically, for the  $\ell^q$ norms, $1\leq q \leq \infty$, $q\neq 2$, and for the polyhedral norms,  analogues of Laman's theorem were established in \cite{kit-pow} and \cite{kitson}. In this paper, we take a more general viewpoint and consider the infinitesimal rigidity properties of bar-joint frameworks in an arbitrary finite dimensional real normed vector space (also referred to in the literature as a Minkowski space \cite{tho}). In particular, in Section~\ref{sec:basicdef} we introduce a rigidity matrix and, using this new matrix, we derive Maxwell-Laman-type counting conditions which are necessary for a bar-joint framework to be infinitesimally rigid.

Over the last few years, a range of tools and methods have been developed for analysing the impact of symmetry on the rigidity properties of frameworks in Euclidean $d$-space (see e.g. \cite{cfgsw,FGsymmax,KG2,owen,schulze,schtan}). In particular, in \cite{FGsymmax} Fowler and Guest derived new necessary conditions for a symmetric bar-joint framework to be isostatic (i.e., minimally infinitesimally rigid) in $(\mathbb{R}^d,\|\cdot \|_2)$, and it was shown in \cite{cfgsw} that these conditions can be stated in a very simple form in terms of the number of structural components that are fixed by various symmetry operations of the framework (see also \cite{gsw,sfg} for extensions of these results  to body-bar and body-hinge frameworks). The fundamental underlying result is that
the rigidity matrix of a symmetric framework can be transformed into a block-decomposed form using methods from group representation theory \cite{KG2,BS2}. For the symmetry groups of order $2$ and $3$, it was shown in \cite{schulze,BS4} that Laman's conditions, together with the added conditions on the number of fixed  structural components, are also sufficient for a bar-joint framework which is symmetry-generic (i.e., as generic as possible subject to the given symmetry constraints) to be isostatic in the Euclidean plane. However, the analogous questions for the remaining symmetry groups which allow isostatic frameworks in $\mathbb{R}^2$ remain open.

In Section~\ref{sec:symmetric} we extend the results in \cite{cfgsw,FGsymmax} and derive new necessary criteria for a bar-joint framework $(G,p)$  with a non-trivial point group $\Gamma$ in a general Minkowski space to be isostatic.
Fundamental to this approach is Proposition~\ref{IntertwiningLemma} which shows that the rigidity matrix of $(G,p)$  intertwines representations of $\Gamma$ associated with the edges and vertices of $G$ (also known as the `internal' and `external' representation in the engineering community \cite{FGsymmax,KG2}). Subsequently, in Section~\ref{sec:FiniteIsometryGroup}, we follow the approach of Connelly et al. in \cite{cfgsw} to derive a complete list of the necessary counting conditions for  symmetric isostatic bar-joint frameworks in $2$- or $3$-dimensional normed spaces for which the group of linear isometries is finite. 
 As in the Euclidean case, these conditions are in terms of counts for the number of vertices and edges that are fixed by various symmetry operations. Using the results of Section~\ref{sec:symmetric}, analogous necessary conditions for isostaticity can also be obtained for symmetric frameworks in higher dimensions, and we provide a sample of those in Section~\ref{sec:FiniteIsometryGroup} as well.

Finally, in Section~\ref{sec:furwork}, we provide a number of observations and conjectures regarding both necessary and sufficient conditions for the existence of a (symmetric) well-positioned isostatic bar-joint framework  in $2$-dimensional normed spaces. In particular, we offer Laman-type conjectures for all possible symmetry groups for the polyhedral norm $\|\cdot\|_\P$ on $\bR^2$, where the unit ball $\P$ is a quadrilateral.


\section{Maxwell counts for infinitesimally rigid frameworks with general norms.} 
\label{sec:basicdef}
Let $X$ be a finite dimensional real vector space.
A {\em bar-joint framework}  in $X$ is a pair $(G,p)$ consisting of a finite simple graph $G$ and a point $p=(p(v))_{v\in V}\in X^{|V|}$ with the property that the components $p(v)$ are distinct points in $X$.
Here $V$ denotes the vertex set of $G$. The edge set of $G$ is denoted by $E$.

\subsection{The rigidity matrix  in general normed spaces.}
Let $\|\cdot\|$ be a norm on $X$ and denote by $S$ the unit sphere, $S=\{x\in X:\|x\|=1\}$.
Recall that the norm is said to be {\em smooth} at a point $x_0\in S$ if there exists exactly one supporting hyperplane for $S$ at $x_0$. Equivalently, there exists a unique linear functional $f\in X^*$ (called a {\em support functional} for $x_0$) such that $|f(x)|\leq1$ for all $x\in S$ and $f(x_0)=1$.  Note that in this case the norm is also smooth at $-x_0$ with unique support functional $-f$.
For each pair $x_0,y\in X$ define,
\[\psi_-(x_0;y):=\lim_{t\to 0^-}\frac{1}{t}\left(\|x_0+ty\|-\|x_0\|\right), \,\,\,\,\,\,\,\,\,\,\,\,
\psi_+(x_0;y):=\lim_{t\to 0^+}\frac{1}{t}\left(\|x_0+ty\|-\|x_0\|\right).\]
As the norm is necessarily a convex function both of these one-sided limits exist. Moreover, if $x_0\in S$ and $f$ is a support functional for $x_0$ then $\psi_-(x_0;y)\leq f(y)\leq\psi_+(x_0;y)$ for all $y\in X$.
Define $\psi(x_0;y)$ to be the two-sided limit $\psi(x_0;y):=\lim_{t\to 0}\frac{1}{t}\left(\|x_0+ty\|-\|x_0\|\right)$, if this limit exists.
We require the following well-known fact and include a proof for the readers convenience
(see also \cite{gil,phe} for example).

\begin{lemma}
\label{Smooth}
Let $x_0\in S$.
Then  the norm is smooth at $x_0$ if and only if $\psi(x_0;y)$ exists for all $y\in X$.
\end{lemma}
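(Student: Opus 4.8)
The plan is to prove both implications by relating the two-sided directional derivative to the support functional(s) of $x_0$. Recall from the discussion preceding the lemma that for any $x_0 \in S$ and any support functional $f$ for $x_0$ we have the sandwich $\psi_-(x_0;y) \le f(y) \le \psi_+(x_0;y)$ for all $y \in X$, and that $\psi_\pm(x_0;y)$ always exist by convexity of the norm. A further elementary but crucial observation is that $\psi_-(x_0;y) = -\psi_+(x_0;-y)$, which follows directly from the definitions by the substitution $t \mapsto -t$; hence $\psi(x_0;y)$ exists (i.e.\ $\psi_-(x_0;y) = \psi_+(x_0;y)$) if and only if $\psi_+(x_0;y) + \psi_+(x_0;-y) = 0$, and since $\psi_+$ is subadditive in $y$ (again a consequence of convexity) this sum is always $\ge 0$, so existence of the two-sided limit at $y$ and at $-y$ are equivalent and amount to the single linear relation $\psi_+(x_0;-y) = -\psi_+(x_0;y)$.

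For the forward implication, assume the norm is smooth at $x_0$, so there is a unique support functional $f$. I would first note that $g(y) := \psi_+(x_0;y)$ is a positively homogeneous, subadditive (sublinear) functional on $X$ dominating every linear functional it sandwiches. The standard Hahn--Banach argument shows that for each fixed $y_0$ there is a linear functional $h$ with $h(y_0) = g(y_0)$ and $h \le g$ everywhere; any such $h$ satisfies $h(x) \le g(x) = \psi_+(x_0;x)$ and $-h(x) \le g(-x) = \psi_+(x_0;-x) = -\psi_-(x_0;x)$, i.e.\ $\psi_-(x_0;x) \le h(x) \le \psi_+(x_0;x)$ for all $x$; in particular taking $x = x_0$ gives $h(x_0) = 1$ (since $\psi_\pm(x_0;x_0) = \|x_0\| = 1$ directly from the definition) and $|h(x)| \le \|x\|$, so $h$ is a support functional for $x_0$. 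By uniqueness, $h = f$, so $g(y_0) = f(y_0) = -g(-y_0)$; as $y_0$ was arbitrary this is exactly the linear relation above, hence $\psi(x_0;y)$ exists (and equals $f(y)$) for every $y$.

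For the converse, assume $\psi(x_0;y)$ exists for all $y \in X$. Then $y \mapsto \psi(x_0;y)$ is positively homogeneous, and from $\psi(x_0;y) = -\psi(x_0;-y)$ together with subadditivity of $\psi_+$ one gets $\psi(x_0;y_1) + \psi(x_0;y_2) \ge \psi(x_0;y_1+y_2)$ and, replacing $y_i$ by $-y_i$, the reverse inequality, so $\psi(x_0;\cdot)$ is additive, hence linear; call it $f$. From the general sandwich $\psi_-(x_0;y) \le f'(y) \le \psi_+(x_0;y)$ valid for any support functional $f'$, and the fact that now $\psi_- = \psi_+ = f$, we conclude $f' = f$, so the support functional is unique and the norm is smooth at $x_0$. (That $f$ itself is a support functional — $|f(y)| \le \|y\|$ and $f(x_0)=1$ — follows since $\psi_+(x_0;y) \le \|y\|$ by the triangle inequality and $f(x_0) = \psi(x_0;x_0) = 1$.)

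The main obstacle, such as it is, is the forward direction: one must produce a \emph{linear} functional out of the one-sided derivative and identify it with the support functional, which requires invoking Hahn--Banach (or, since $X$ is finite-dimensional, an elementary extension argument) and then carefully checking that the functional so produced genuinely qualifies as a support functional so that uniqueness can be applied. The converse is comparatively soft, being just the observation that a two-sided directional derivative of a convex function is automatically linear and must coincide with anything it is sandwiched between. I would present the symmetry identity $\psi_-(x_0;y) = -\psi_+(x_0;-y)$ and the sublinearity of $\psi_+$ as the two reusable facts up front, then dispatch each implication in a few lines.
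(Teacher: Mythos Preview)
Your proof is correct and follows essentially the same approach as the paper: both directions hinge on the sublinearity of $\psi_+(x_0;\cdot)$, the identity $\psi_-(x_0;y)=-\psi_+(x_0;-y)$, and a Hahn--Banach extension to produce support functionals sandwiched between $\psi_-$ and $\psi_+$. The only cosmetic difference is that the paper argues the forward implication by contradiction (a gap $\psi_-(x_0;y_0)<\psi_+(x_0;y_0)$ yields a continuum of support functionals), whereas you argue it directly (each extension must coincide with the unique $f$, forcing $\psi_+(x_0;y_0)=f(y_0)$); the underlying idea is identical.
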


\proof
Suppose  the norm is smooth at $x_0$ and that $\psi_-(x_0;y_0)\not=\psi_+(x_0;y_0)$ for some $y_0\in X$. By an application of the Hahn-Banach theorem,
for each $\lambda\in \bR$ with $\psi_-(x_0;y_0)<\lambda<\psi_+(x_0;y_0)$ the mapping $h_\lambda:\bR y_0 \to \bR$, $h_\lambda(ty_0)=t\lambda$ extends to a linear functional $h_\lambda\in X^*$ such that $\psi_-(x_0;y)\leq h_\lambda(y) \leq\psi_+(x_0;y)$ for all $y\in X$. Here we use the fact that $\psi_+(x_0;\cdot)$ is a sublinear function and $\psi_-(x_0;y)=-\psi_+(x_0;-y)$ for all $y\in X$. 
It follows that $h_\lambda$ is a support functional for $x_0$. However, this contradicts the uniqueness of the support functional at $x_0$ and so $\psi(x_0;y)$ must exist for all $y\in X$.
Conversely, if  $\psi(x_0;y)$ exists for all $y\in X$ then it follows from the sublinearity of $\psi_+(x_0;\cdot)$ and the identity $\psi_-(x_0;y)=-\psi_+(x_0;-y)$ that the map $f(y)=\psi(x_0;y)$ is a linear functional on $X$.
Moreover, $f$ is a support functional for $x_0$ and, since every support functional $g$ for $x_0$ must satisfy $\psi_-(x_0;y)\leq g(y)\leq\psi_+(x_0;y)$ for all $y\in X$, the support functional $f$ must be unique. Thus the norm is smooth at $x_0$.
\endproof

\begin{definition}
Let $(G,p)$ be a bar-joint framework in $X$ and let $\|\cdot\|$ be a norm on $X$.
\begin{enumerate}[(i)]
\item
An edge $vw$ of $G$ is said to be {\em well-positioned} (for  the placement $p$ and norm $\|\cdot\|$)
if the norm is smooth at $\frac{p_v-p_w}{\|p_v-p_w\|}$.
\item
The bar-joint framework $(G,p)$ is said to be {\em well-positioned} in $(X,\|\cdot\|)$ if every edge of $G$ is well-positioned.
\end{enumerate}
\end{definition}

Given a well-positioned edge $e=vw$ in $G$, the unique support functional for $\frac{p_v-p_w}{\|p_v-p_w\|}$
is denoted $\varphi_{v,w}$. Note that $\varphi_{v,w}=-\varphi_{w,v}$ and from the proof of Lemma \ref{Smooth},
\begin{eqnarray}
\label{SupportEqn}
\varphi_{v,w}(y) = \psi\left(\frac{p_v-p_w}{\|p_v-p_w\|};y\right) = \psi\left(p_v-p_w;y\right)
\end{eqnarray}
for all $y\in X$.

\begin{definition}
The {\em rigidity matrix} for a well-positioned bar-joint framework $(G,p)$ is an $|E|\times |V|$ matrix $R(G,p)$ with entries in the dual space $X^*$ given by,
\[a_{(e,v)}=\left\{\begin{array}{ll}
\varphi_{v,w} & \mbox{ if }e=vw \mbox{ for some vertex } w,\\
0 & \mbox{ otherwise,}
\end{array}\right.\]
for all $(e,v)\in E\times V$.
\end{definition}
The rigidity matrix $R(G,p)$ may be viewed as a linear map from $X^{|V|}\to \bR^{|E|}$
given by the formula
\begin{eqnarray}
\label{RigidityMatrixEqn}
\left(u_v\right)_{v\in V} \, \mapsto \, \left(\,\sum_{v\in V} a_{(e,v)}(u_v)\,\right)_{e\in E}
= \,\,\,\,\,\left(\, \varphi_{v,w}(u_v-u_w)\,\right)_{vw\in E}
\end{eqnarray}

\begin{remark}
In computations it is sometimes more natural to define the entries of the rigidity matrix to be the support functionals for $p_v-p_w$ rather than for the normalised vectors $\frac{p_v-p_w}{\|p_v-p_w\|}$.
This is common practice in the case of the Euclidean norm and is also the approach taken in \cite{kit-pow} for the smooth $\ell^p$ norms. In \cite{kitson} the above formulation of the rigidity matrix  is used in the context of polyhedral norms. The approach taken here means that the rigidity matrix coincides with the differential of the rigidity map which we now introduce.
\end{remark}

\begin{definition}
Let $G$ be a simple graph and let $\|\cdot\|$ be a norm on $X$. The {\em rigidity map}
for $G$ is the mapping
\[f_G:X^{|V|}\to \bR^{|E|}, \,\,\,\,\,\,\,\, x=\left(x_v\right)_{v\in V}\mapsto \left(\|x_v-x_w\|\right)_{vw\in E}\]
\end{definition}

\begin{proposition}
\label{Differentiable}
A bar-joint framework $(G,p)$ is well-positioned in $(X,\|\cdot\|)$ if and only if
the rigidity map $f_G$ is differentiable at $p$.
Moreover, in this case $df_G(p)=R(G,p)$.
\end{proposition}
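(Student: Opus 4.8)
The plan is to reduce the statement to a single edge and then invoke the chain rule together with Lemma~\ref{Smooth}. A map into $\bR^{|E|}$ is differentiable at $p$ if and only if each of its coordinate functions is, so it suffices to analyse, for a fixed edge $e=vw$, the function $g_e\colon X^{|V|}\to\bR$, $x\mapsto\|x_v-x_w\|$. Writing $g_e=N\circ L_e$ with $N\colon X\to\bR$ the norm and $L_e\colon X^{|V|}\to X$ the linear map $x\mapsto x_v-x_w$, and using that the joints of $(G,p)$ are distinct so that $L_e(p)=p_v-p_w\neq 0$, the whole question becomes one about differentiability of the norm $N$ at the nonzero point $z_0:=p_v-p_w$.

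The key step I would isolate is the assertion: \emph{$N$ is differentiable at a nonzero $z_0$ if and only if the norm is smooth at $z_0/\|z_0\|$, and in that case $dN(z_0)=\psi(z_0;\cdot)=\varphi_{v,w}$.} For the forward implication, if $N$ is differentiable at $z_0$ then in particular the two-sided directional derivatives $\psi(z_0;y)$ exist for all $y\in X$; by positive homogeneity of the norm $\psi(z_0;y)=\psi(z_0/\|z_0\|;y)$ (as recorded in \eqref{SupportEqn}), so $\psi(z_0/\|z_0\|;y)$ exists for all $y$ and Lemma~\ref{Smooth} yields smoothness of the norm at $z_0/\|z_0\|$. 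For the converse, if the norm is smooth at $z_0/\|z_0\|$ then $\psi(z_0;\cdot)$ exists and equals the support functional $\varphi_{v,w}$, so $N$ is Gateaux differentiable at $z_0$ with \emph{linear} Gateaux derivative $\varphi_{v,w}$; it then remains to upgrade this to Fréchet differentiability. Here I would argue directly: the remainder $r(y):=\|z_0+y\|-\|z_0\|-\varphi_{v,w}(y)$ is convex, satisfies $r(0)=0$, and is nonnegative because $\|z_0+y\|\geq\varphi_{v,w}(z_0+y)=\|z_0\|+\varphi_{v,w}(y)$; convexity makes $t\mapsto r(ty)/t$ nondecreasing on $t>0$, Gateaux differentiability makes it tend to $0$ for each fixed $y$, and, since the unit sphere is compact in the finite-dimensional space $X$ and $y\mapsto r(ty)/t$ is continuous, Dini's theorem gives uniform convergence on the sphere, i.e.\ $r(y)/\|y\|\to 0$ as $y\to 0$. (One may instead simply quote the classical fact that a finite convex function on a finite-dimensional space that is Gateaux differentiable at a point is Fréchet differentiable there.)

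Granting this, both directions of the proposition follow by bookkeeping. If $(G,p)$ is well-positioned, then for every edge $e=vw$ the norm is smooth at $(p_v-p_w)/\|p_v-p_w\|$, hence $N$ is differentiable at $p_v-p_w$, and by the chain rule $g_e=N\circ L_e$ is differentiable at $p$ with $dg_e(p)(u)=dN(p_v-p_w)(u_v-u_w)=\varphi_{v,w}(u_v-u_w)$, which is exactly the $e$-th entry of $R(G,p)u$ by \eqref{RigidityMatrixEqn}; assembling over $e\in E$ shows $f_G$ is differentiable at $p$ with $df_G(p)=R(G,p)$. Conversely, if $f_G$ is differentiable at $p$ then each $g_e$ is; composing with the affine embedding $\iota\colon X\to X^{|V|}$ sending $z$ to the tuple that equals $p$ except with $p_v$ replaced by $p_v+z$, one gets that $z\mapsto N((p_v-p_w)+z)=g_e(\iota(z))$ is differentiable at $0$, so $N$ is differentiable at $p_v-p_w$ and $e$ is well-positioned; since $e$ was arbitrary, $(G,p)$ is well-positioned.

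The only genuinely analytic point is the passage from Gateaux to Fréchet differentiability of the norm at a smooth point; everything else is the chain rule applied to the factorisation $g_e=N\circ L_e$ and to the affine embedding $\iota$, plus the already-established Lemma~\ref{Smooth}. I expect this Gateaux-to-Fréchet step to be the main obstacle, and I would include the short Dini's theorem argument above to keep the paper self-contained.
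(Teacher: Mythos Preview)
Your proof is correct and follows essentially the same route as the paper: both directions rest on Lemma~\ref{Smooth} together with the fact that a convex function on a finite-dimensional space whose Gateaux derivative is linear is automatically Fr\'echet differentiable. The paper works with $f_G$ directly and dispatches the Gateaux-to-Fr\'echet step in a single clause (``since $f_G$ is convex''), whereas you factor edge-by-edge via $g_e=N\circ L_e$ and supply an explicit Dini's theorem argument; your version is more detailed and self-contained, but the underlying mathematics is the same.
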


\proof
If $(G,p)$ is well-positioned then, by Formula (\ref{SupportEqn}), for each $u=(u_v)_{v\in V}\in X^{|V|}$  the directional derivative of $f_G$ at $p$ in the direction of $u$ exists and satisfies,
\begin{eqnarray*}
\label{DirectionalDerivativeEqn}
D_uf_G(p) := \lim_{t\to 0}\,\frac{f_G(p+tu)-f_G(p)}{t}
=\left(\,\psi(p_v-p_w;\,u_v-u_w)\,\right)_{vw\in E}
= \left(\, \varphi_{v,w}(u_v-u_w)\,\right)_{vw\in E} 
\end{eqnarray*}
Thus by Formula (\ref{RigidityMatrixEqn}),  $D_uf_G(p) = R(G,p)u$. 
In particular, the map $u\mapsto D_uf_G(p)$ is linear and, since $f_G$ is convex, it follows that $f_G$ is differentiable at $p$ with differential $df_G(p)$ satisfying,
\begin{eqnarray}
\label{DirectionalDerivativeEqn}
df_G(p)u = D_uf_G(p)= R(G,p)u, \,\,\,\,\,\,\, \mbox{ for all }u\in X^{|V|}.
\end{eqnarray}
Conversely,  if the rigidity map $f_G$ is differentiable at $p$ then the directional derivative $D_uf_G(p)$ exists for all $u\in X^{|V|}$.
Hence if $vw\in E$ is an edge of $G$ then $\psi(p_v-p_w;y)$ exists for all $y\in X$ and so, by Lemma \ref{Smooth}, the norm is smooth at $\frac{p_v-p_w}{\|p_v-p_w\|}$. We conclude that  $(G,p)$ is well-positioned in $(X,\|\cdot\|)$.
\endproof

\begin{remark}
If $\|\cdot\|$ is a smooth norm (such as the Euclidean norm on $\bR^d$ or, more generally, an $\ell^p$ norm on $\bR^d$ with $p\in(1,\infty)$) then  all bar-joint frameworks in $(X,\|\cdot\|)$ are well-positioned.
If $\|\cdot\|$ is a polyhedral norm then $(G,p)$ is well-positioned if and only if  $p_v-p_w$ is contained in the interior of the conical hull of some facet of the unit ball,
for each edge $vw\in E$.
\end{remark}

\begin{definition}
Let $(G,p)$ be a bar-joint framework in  $(X,\|\cdot\|)$.
An {\em infinitesimal flex} of $(G,p)$ is an element $u\in X^{|V|}$ which satisfies
$D_uf_G(p)=0$.
\end{definition}

The collection of all infinitesimal flexes of $(G,p)$ is denoted $\F(G,p)$.
By  Formula (\ref{DirectionalDerivativeEqn}) it is clear that if $(G,p)$ is well-positioned then $\F(G,p)=\ker R(G,p)$.

\begin{definition}
A {\em rigid motion} of $(X,\|\cdot\|)$ is a family of continuous maps \[\alpha_x:[-1,1]\to X, \,\,\,\,\,\,\,\,x\in X\]
such that $\alpha_x(t)$ is differentiable at $t=0$ with $\alpha_x(0)=x$ and $\|\alpha_x(t)-\alpha_y(t)\| =\|x-y\|$ for all pairs $x,y\in X$ and all $t\in [-1,1]$.
\end{definition}

If $\{\alpha_x:x\in X\}$ is a rigid motion of $(X,\|\cdot\|)$ then
$(\alpha_{p(v)}'(0))_{v\in V}\in X^{|V|}$ is an infinitesimal flex of $(G,p)$ (see \cite[Lemma 2.1]{kit-pow}).
We regard such infinitesimal flexes as trivial and denote by $\T(G,p)$ the collection of all trivial infinitesimal flexes of $(G,p)$.
 If every infinitesimal flex of $(G,p)$ is trivial then we say that $(G,p)$ is {\em infinitesimally rigid}. 
If $(G,p)$ is infinitesimally rigid and the removal of any edge results in a framework which is not infinitesimally rigid,  then we say that $(G,p)$ is {\em isostatic}.

\begin{theorem}
\label{thm:maxwell}
Let $(G,p)$ be a well-positioned bar-joint framework in $(X,\|\cdot\|)$.
\begin{enumerate}
\item If $(G,p)$ is infinitesimally rigid then,
$|E|\geq \dim (X)\,|V|- \dim \T(G,p)$.
 \item If $(G,p)$ is isostatic then
$|E|= \dim (X)\,|V|-\dim \T(G,p)$.
\item If $(G,p)$ is isostatic and $H$ is a subgraph of $G$ then,
$|E(H)|\leq \dim (X)\,|V(H)|-\dim \T(H,p)$.
\end{enumerate}
\end{theorem}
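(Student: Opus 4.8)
The plan is to reduce all three parts to elementary linear algebra for the map $R(G,p)\colon X^{|V|}\to\bR^{|E|}$, using the two facts already in hand: well-positionedness gives $\F(G,p)=\ker R(G,p)$, and the rank-nullity theorem gives $\rank R(G,p)=\dim(X)\,|V|-\dim\F(G,p)$. I will also use two elementary observations throughout. First, $\T(G,p)\subseteq\F(G,p)$, since trivial flexes are flexes. Second, the space $\T(G,p)$ of trivial flexes depends only on the placement $p$ and on $(X,\|\cdot\|)$ -- a rigid motion refers to no graph -- so deleting an edge of $G$ changes neither $\T(G,p)$ nor, since every surviving edge is still well-positioned, the well-positionedness of the framework. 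Part (1) is then immediate: infinitesimal rigidity means $\F(G,p)=\T(G,p)$, so $\rank R(G,p)=\dim(X)\,|V|-\dim\T(G,p)$, and this rank is at most the number of rows $|E|$.

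For part (2) I would show that isostaticity forces the rows of $R(G,p)$ to be linearly independent, so that $\rank R(G,p)=|E|$ and the inequality in (1) becomes an equality. If the rows were dependent, then some row -- say the one indexed by an edge $e$ -- would lie in the span of the remaining rows, so $R(G-e,p)$ would have the same row space, hence the same rank, and (as $G-e$ has the same vertex set) the same kernel dimension as $R(G,p)$. Since deleting a row only enlarges the kernel, this would give $\F(G-e,p)=\ker R(G-e,p)=\ker R(G,p)=\F(G,p)=\T(G,p)=\T(G-e,p)$, so that $(G-e,p)$ -- still well-positioned -- would be infinitesimally rigid, contradicting isostaticity.

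For part (3), note first that $(H,p)$ is well-positioned because every edge of $H$ is an edge of $G$, and that $R(H,p)$ is precisely the submatrix of $R(G,p)$ obtained by keeping the rows indexed by $E(H)$ and the columns indexed by $V(H)$. The key point is that each edge of $H$ has both endpoints in $V(H)$, so each of those rows of $R(G,p)$ already vanishes in all columns outside $V(H)$; the column restriction discards only zeros, and the rows of $R(H,p)$ thus form a subset of the rows of $R(G,p)$, which are linearly independent by part (2). Hence $\rank R(H,p)=|E(H)|$, and rank-nullity gives $|E(H)|=\dim(X)\,|V(H)|-\dim\F(H,p)\leq\dim(X)\,|V(H)|-\dim\T(H,p)$, using $\T(H,p)\subseteq\F(H,p)$.

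The arguments are essentially routine once the dictionary $\F=\ker R$ is in place; the step that needs the most care -- and the real content of the theorem -- is the edge-deletion argument in part (2), where one must check that removing a dependent row leaves the kernel, the trivial flex space and the well-positionedness all undisturbed, so that minimality is genuinely contradicted. Part (3) then rests entirely on the bookkeeping remark that passing to a subgraph restricts $R(G,p)$ to a submatrix which, on the rows that survive, truncates nothing but zeros.
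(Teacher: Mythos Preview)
Your proof is correct and follows essentially the same approach as the paper: rank--nullity applied to $R(G,p)$, together with the identification $\F(G,p)=\ker R(G,p)$ and the fact that isostaticity forces row independence. The only difference is that you spell out the edge-deletion argument showing that isostaticity implies row independence (and that this independence passes to $R(H,p)$ for subgraphs), whereas the paper simply asserts both facts; your added detail is sound.
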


\proof
$(i)$
In general, the rigidity matrix satisfies,
\[\dim \ker R(G,p) - \dim \coker R(G,p) = \dim (X^{|V|}) - \dim \bR^{|E|} =\dim (X)\,|V|  - |E|\]
Since $(G,p)$ is infinitesimally rigid and well-positioned in $(X,\|\cdot\|)$, $\dim \ker R(G,p)=\dim \F(G,p)
=\dim \T(G,p)$. Thus
\[|E|- \dim \coker R(G,p)  = \dim (X)\,|V|- \dim \T(G,p)\]

$(ii)$
If $(G,p)$ is isostatic then $R(G,p)$ is  row independent and $\dim \ker R(G,p)=\dim \T(G,p)$.
Thus,
\[|E| = \rank R(G,p) = \dim (X^{|V|}) - \dim \ker R(G,p) =  \dim (X)\,|V| - \dim \T(G,p).\]

$(iii)$
If $(G,p)$ is isostatic and $H$ is a subgraph of $G$ then $R(H,p)$ is row independent and so,
\[|E(H)| = \rank R(H,p) = \dim (X^{|V(H)|}) - \dim \ker R(H,p) \leq  \dim (X)|V(H)| - \dim \T(H,p).\]
\endproof

\section{Maxwell counts for symmetric frameworks with general norms.}
\label{sec:symmetric}
Let $(G,p)$ be a bar-joint framework in $X$ and let $\|\cdot\|$ be a norm on $X$.
An {\em automorphism} of the graph $G$ is a permutation of the vertices $\pi:V\to V$ such that $vw\in E$ if and only if $\pi(v)\pi(w)\in E$. The set $ \textrm{Aut}(G)$ of all automorphisms of $G$ is  a subgroup of the permutation group on $V$.
An {\em action} of a group $\Gamma$ on $G$ is a group homomorphism $\theta:\Gamma\to  \textrm{Aut}(G)$.
The graph $G$ is said to be {\em $\Gamma$-symmetric} (with respect to $\theta$) if there exists such an action.
As a notational convenience, if the action $\theta$ is clear from the context then we will denote $\theta(\gamma)(v)$ by  $\gamma v$ for each vertex $v$ and  $(\gamma v)(\gamma w)$ by  $\gamma(vw)$ for each  edge $vw$ of $G$.

Suppose there exists a group representation $\tau:\Gamma\rightarrow \GL(X)$  such that $\tau(\gamma)$ is an isometry of $(X,\|\cdot\|)$ for each $\gamma\in \Gamma$.
The framework $(G,p)$ is said to be {\em \textrm{$\Gamma$-symmetric}} (with respect to $\theta$ and $\tau$) if
\begin{equation}
\label{eq:symmetric_func}
\tau(\gamma) (p(v))=p(\gamma v) \qquad \text{for all } \gamma\in \Gamma \text{ and all } v\in V.
\end{equation}
Moreover, we say that $\gamma$ is a \emph{symmetry operation} and $\Gamma$ is a {\em symmetry group} of the framework $(G,p)$.

\subsection{Symmetry adapted Maxwell counts for isostatic frameworks.} 
\label{subsec:neccond}
Let $(G,p)$ be a $\Gamma$-symmetric framework with respect
to an action $\theta:\Gamma\rightarrow \textrm{Aut}(G)$ and a group representation $\tau:\Gamma\rightarrow \GL(X)$. Define a pair of  permutation representations of $\Gamma$ as follows,
\[P_V:\Gamma\to \GL(\bR^{|V|}), \,\,\,\,\,\,\,\, P_V(\gamma)(a_v)_{v\in V}= (a_{\gamma^{-1}v})_{v\in V},\]
\[P_E:\Gamma\to \GL(\bR^{|E|}), \,\,\,\,\,\,\,\, P_E(\gamma)(a_e)_{e\in E}= (a_{\gamma^{-1}e})_{e\in E}.\]
The trivial representation of $\Gamma$ on $X$ is denoted by ${\bf1}$.
We will require the following tensor product representations of $\Gamma$ on $X^{|V|}$,
\[{\bf1}\otimes P_V: \Gamma \to \GL(X^{|V|}), \,\,\,\,\,\,\,\,\, ({\bf1}\otimes P_V)(\gamma)(x_v)_{v\in V}=(x_{\gamma^{-1}v})_{v\in V}\]
\[\hspace{8mm} \tau\otimes P_V: \Gamma \to \GL(X^{|V|}), \,\,\,\,\,\,\,\,\, (\tau\otimes P_V)(\gamma)(x_v)_{v\in V}=(\tau(\gamma)(x_{\gamma^{-1}v}))_{v\in V}\]

Recall that given two representations $\rho_1:\Gamma \to \GL(X)$ and $\rho_2:\Gamma\to \GL(Y)$
with representation spaces $X$ and $Y$,
a linear map $T:X\rightarrow Y$ is said to be a \emph{$\Gamma$-linear map} of $\rho_1$ and $\rho_2$  if $T\circ\rho_1(\gamma)=\rho_2(\gamma)\circ T$ for all $\gamma \in \Gamma$.
The vector space of all $\Gamma$-linear maps of $\rho_1$ and $\rho_2$
is denoted by $\Hom_{\Gamma}(\rho_1,\rho_2)$.

\begin{proposition}
\label{IntertwiningLemma}
Let $(G,p)$ be a well-positioned bar-joint framework in $(X,\|\cdot\|)$.
If $(G,p)$ is $\Gamma$-symmetric with respect to $\theta:\Gamma\to \Aut(G)$ and $\tau:\Gamma\to \GL(X)$ then,
\[df_G(p)\in \Hom_{\Gamma}(\tau \otimes P_V,P_E).\]
\end{proposition}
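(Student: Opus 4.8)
Unwinding the definition of $\Hom_{\Gamma}$ and using the identification $df_G(p)=R(G,p)$ from Proposition~\ref{Differentiable}, the plan is to prove the equivalent statement that the rigidity matrix satisfies
\[
R(G,p)\circ(\tau\otimes P_V)(\gamma)=P_E(\gamma)\circ R(G,p)\qquad\text{for every }\gamma\in\Gamma.
\]
I would verify this by evaluating both sides on an arbitrary $u=(u_v)_{v\in V}\in X^{|V|}$ and comparing the components indexed by an edge $e=vw\in E$, using the explicit description of $R(G,p)$ in~(\ref{RigidityMatrixEqn}).

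The crux is a transformation rule for the support functionals under the isometries $\tau(\gamma)$: for every edge $vw\in E$ and every $\gamma\in\Gamma$ (note that $\gamma^{-1}(vw)$ is again an edge of $G$, hence well-positioned, so that $\varphi_{\gamma^{-1}v,\gamma^{-1}w}$ is defined),
\[
\varphi_{v,w}\circ\tau(\gamma)=\varphi_{\gamma^{-1}v,\,\gamma^{-1}w}.
\]
To establish this I would first use the symmetry relation~(\ref{eq:symmetric_func}) to write $p_v-p_w=\tau(\gamma)\bigl(p_{\gamma^{-1}v}-p_{\gamma^{-1}w}\bigr)$. Next, since $\tau(\gamma)$ is a linear isometry of $(X,\|\cdot\|)$, for any $x_0,y\in X$ one has $\|\tau(\gamma)x_0+ty\|=\|x_0+t\,\tau(\gamma)^{-1}y\|$ and $\|\tau(\gamma)x_0\|=\|x_0\|$, so dividing by $t$ and passing to the limit yields $\psi(\tau(\gamma)x_0;y)=\psi(x_0;\tau(\gamma)^{-1}y)$. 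Applying this with $x_0=p_{\gamma^{-1}v}-p_{\gamma^{-1}w}$ and invoking~(\ref{SupportEqn}) gives $\varphi_{v,w}(y)=\psi(p_v-p_w;y)=\psi(x_0;\tau(\gamma)^{-1}y)=\varphi_{\gamma^{-1}v,\gamma^{-1}w}(\tau(\gamma)^{-1}y)$, which rearranges to the displayed identity.

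It then remains to assemble the pieces. On the one hand, $(\tau\otimes P_V)(\gamma)u=(\tau(\gamma)u_{\gamma^{-1}v})_{v\in V}$, so by~(\ref{RigidityMatrixEqn}) and linearity of $\tau(\gamma)$ the $e=vw$ component of $R(G,p)\bigl((\tau\otimes P_V)(\gamma)u\bigr)$ is $\varphi_{v,w}\bigl(\tau(\gamma)(u_{\gamma^{-1}v}-u_{\gamma^{-1}w})\bigr)$. On the other hand, by the definition of $P_E$ the $e=vw$ component of $P_E(\gamma)\bigl(R(G,p)u\bigr)$ is the $\gamma^{-1}e=(\gamma^{-1}v)(\gamma^{-1}w)$ component of $R(G,p)u$, namely $\varphi_{\gamma^{-1}v,\gamma^{-1}w}(u_{\gamma^{-1}v}-u_{\gamma^{-1}w})$. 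By the transformation rule above these two expressions coincide, and since $u$ and $vw$ were arbitrary the intertwining identity follows.

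I do not anticipate a genuine obstacle here: the argument is a short computation resting on the fact that a linear isometry preserves the one-sided difference quotients defining $\psi$. The only point that demands care is the bookkeeping with $\gamma$ versus $\gamma^{-1}$ imposed by the conventions in the definitions of $P_V$ and $P_E$, together with noting that $\gamma^{-1}(vw)\in E$ so that all the support functionals appearing in the computation are indeed defined.
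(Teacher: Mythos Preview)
Your argument is correct. The transformation rule $\varphi_{v,w}\circ\tau(\gamma)=\varphi_{\gamma^{-1}v,\gamma^{-1}w}$ is exactly the right ingredient, and your derivation of it from the isometry property of $\tau(\gamma)$ via the difference quotients defining $\psi$ is clean. The bookkeeping with $\gamma$ versus $\gamma^{-1}$ is handled correctly.

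Your route differs in presentation from the paper's. The paper works one level up: rather than computing entry-by-entry with support functionals, it observes that the rigidity map itself satisfies the equivariance identity $f_G\circ(\tau\otimes P_V)(\gamma)=P_E(\gamma)\circ f_G$ (because relabelling vertices and edges consistently permutes the components of $f_G$, while applying the isometry $\tau(\gamma)$ to every coordinate leaves all norms unchanged), and then differentiates this identity at $p$ via the chain rule, using that $(\tau\otimes P_V)(\gamma)p=p$ from the symmetry hypothesis. Your approach unpacks this same content at the level of the individual support functionals $\varphi_{v,w}$ using~(\ref{SupportEqn}) and~(\ref{RigidityMatrixEqn}). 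The paper's argument is slightly more conceptual and would apply verbatim to any differentiable edge-function invariant under isometries; yours is more explicit and makes the mechanism (how the isometry acts on each support functional) completely transparent, at the cost of a little more notation. Mathematically the two are equivalent.
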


\proof Since $(G,p)$ be well-positioned, by Proposition \ref{Differentiable}, the rigidity map $f_G$ is differentiable at $p$. Let $\gamma\in\Gamma$.
Then it is readily verified that $f_G$ is differentiable at $({\bf1}\otimes P_V)(\gamma)p=(p(\gamma^{-1}v))_{v\in V}$ with differential,
\[df_G( ({\bf1}\otimes P_V)(\gamma)p) = P_E(\gamma) \circ df_G(p)\circ ({\bf1}\otimes P_V)(\gamma)^{-1}.  \]
Moreover, since $\tau(\gamma)$ is a linear isometry of $X$ a similar verification shows that
$f_G$ is  differentiable at $(\tau\otimes P_V)(\gamma)p$ with differential,
\[df_G( (\tau\otimes P_V)(\gamma)p) = P_E(\gamma) \circ df_G(p)\circ (\tau\otimes P_V)(\gamma)^{-1}.  \]
From Formula (\ref{eq:symmetric_func}),  $\tau(\gamma) (p(v))=p(\gamma v)$ for all $v\in V$ 
and so $p=(\tau(\gamma)p(\gamma^{-1}v))_{v\in V}=(\tau\otimes P_V)(\gamma)p$. Thus,
\begin{eqnarray*}
df_G(p)\circ(\tau\otimes P_V)(\gamma)
&=& df_G((\tau\otimes P_V)(\gamma)p)\circ(\tau\otimes P_V)(\gamma) \\
&=& P_E(\gamma) \circ df_G(p).
\end{eqnarray*}

\endproof

Recall that if $\rho:\Gamma\to\GL(X)$ is a representation of  $\Gamma$ with representation space $X$ then a  subspace $Y$ of $X$ is said to be \emph{$\rho$-invariant} if $\rho(\gamma)(Y)\subseteq Y$ for all $\gamma\in \Gamma$.

\begin{proposition}
Let $(G,p)$ be a bar-joint framework in $(X,\|\cdot\|)$
which is $\Gamma$-symmetric with respect to $\theta$ and $\tau$. Then
$\T(G,p)$ is a $\tau \otimes P_V$-invariant subspace of $X^{|V|}$.
\end{proposition}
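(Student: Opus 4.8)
The plan is to argue straight from the definition of a trivial infinitesimal flex as the velocity at $t=0$ of a rigid motion of $(X,\|\cdot\|)$, and to show that the operator $(\tau\otimes P_V)(\gamma)$ carries rigid motions to rigid motions. That $\T(G,p)$ is a \emph{linear} subspace of $X^{|V|}$ I would treat as known: for a rigid motion $\{\alpha_x\}$ each map $x\mapsto\alpha_x(t)$ is a surjective isometry of $X$, hence affine by the Mazur--Ulam theorem, so that $\alpha_x'(0)$ depends affinely on $x$ and the resulting flexes form a subspace. Thus the content to be proved is the $\tau\otimes P_V$-invariance, which is what I would concentrate on.

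Fix $\gamma\in\Gamma$ and let $u=(u_v)_{v\in V}\in\T(G,p)$, say $u_v=\alpha_{p(v)}'(0)$ for a rigid motion $\{\alpha_x:x\in X\}$. I would introduce the new family
\[\beta_x:[-1,1]\to X,\qquad \beta_x(t):=\tau(\gamma)\bigl(\alpha_{\tau(\gamma)^{-1}x}(t)\bigr),\qquad x\in X,\]
and check that it is again a rigid motion. Each $\beta_x$ is continuous and differentiable at $0$ because $\tau(\gamma)$ is linear, and $\beta_x(0)=\tau(\gamma)\bigl(\tau(\gamma)^{-1}x\bigr)=x$. For the distance-preserving property, linearity of $\tau(\gamma)$ gives $\beta_x(t)-\beta_y(t)=\tau(\gamma)\bigl(\alpha_{\tau(\gamma)^{-1}x}(t)-\alpha_{\tau(\gamma)^{-1}y}(t)\bigr)$, and applying in turn that $\tau(\gamma)$ is an isometry, that $\{\alpha_x\}$ preserves distances, and that $\tau(\gamma)^{-1}=\tau(\gamma^{-1})$ is an isometry, one obtains
\[\|\beta_x(t)-\beta_y(t)\|=\bigl\|\alpha_{\tau(\gamma)^{-1}x}(t)-\alpha_{\tau(\gamma)^{-1}y}(t)\bigr\|=\bigl\|\tau(\gamma)^{-1}x-\tau(\gamma)^{-1}y\bigr\|=\|x-y\|.\]

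Next I would compute the trivial flex arising from $\{\beta_x\}$. Differentiating at $t=0$ and using again that $\tau(\gamma)$ is linear, $\beta_{p(v)}'(0)=\tau(\gamma)\bigl(\alpha_{\tau(\gamma)^{-1}p(v)}'(0)\bigr)$. The symmetry relation (\ref{eq:symmetric_func}) gives $\tau(\gamma)\bigl(p(\gamma^{-1}v)\bigr)=p(v)$, i.e.\ $\tau(\gamma)^{-1}p(v)=p(\gamma^{-1}v)$, so $\beta_{p(v)}'(0)=\tau(\gamma)\bigl(\alpha_{p(\gamma^{-1}v)}'(0)\bigr)=\tau(\gamma)(u_{\gamma^{-1}v})$. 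Hence
\[\bigl(\beta_{p(v)}'(0)\bigr)_{v\in V}=\bigl(\tau(\gamma)(u_{\gamma^{-1}v})\bigr)_{v\in V}=(\tau\otimes P_V)(\gamma)u,\]
which, being the velocity of the rigid motion $\{\beta_x\}$, lies in $\T(G,p)$. Since $\gamma\in\Gamma$ and $u\in\T(G,p)$ were arbitrary, $\T(G,p)$ is $\tau\otimes P_V$-invariant.

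I do not expect a genuine obstacle: once the construction $\beta_x(t)=\tau(\gamma)(\alpha_{\tau(\gamma)^{-1}x}(t))$ is written down the argument is formal. The one place needing a little care is the distance computation, which genuinely uses both that $\tau(\gamma)$ is a \emph{linear} isometry (so it commutes with forming differences) and that its inverse is an isometry as well; and, if one were unwilling simply to quote that $\T(G,p)$ is a subspace, the Mazur--Ulam reduction mentioned in the first paragraph would be the only mildly substantial ingredient.
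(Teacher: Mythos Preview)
Your proposal is correct and follows essentially the same approach as the paper: both define $\beta_x(t)=\tau(\gamma)\bigl(\alpha_{\tau(\gamma)^{-1}x}(t)\bigr)$, verify it is a rigid motion using that $\tau(\gamma)$ is a linear isometry, and then compute $\beta_{p(v)}'(0)=\tau(\gamma)(u_{\gamma^{-1}v})$ via the symmetry relation $\tau(\gamma)^{-1}p(v)=p(\gamma^{-1}v)$. Your version is in fact slightly more thorough, as you explicitly check $\beta_x(0)=x$ and differentiability at $0$, and you also address the subspace property of $\T(G,p)$ via Mazur--Ulam, which the paper's proof of this proposition leaves implicit.
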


\proof
Let $\gamma\in \Gamma$ and let $u\in \T(G,p)$.
Then there exists a rigid motion $\{\alpha_x:x\in X\}$ of $(X,\|\cdot\|)$ with $u(v)=\alpha_{p(v)}'(0)$ for all $v\in V$.
Now let $\beta_x(t) = \tau(\gamma)(\alpha_{\tau(\gamma)^{-1}x}(t))$.
Since $\tau(\gamma)$ is a linear isometry we have,
\begin{eqnarray*}
\|\beta_x(t)-\beta_y(t)\| &=& \| \tau(\gamma)(\alpha_{\tau(\gamma)^{-1}x}(t))
- \tau(\gamma)(\alpha_{\tau(\gamma)^{-1}y}(t))\| \\
&=&\| \alpha_{\tau(\gamma)^{-1}x}(t)
- \alpha_{\tau(\gamma)^{-1}y}(t)\|\\
&=& \|\tau(\gamma)^{-1}x-\tau(\gamma)^{-1}y\| \\
&=& \|x-y\|
\end{eqnarray*}
Also, since $\tau(\gamma) (p(v))=p(\gamma v)$ for each $v\in V$ we have,
\[\beta_{p(v)}'(0) = \tau(\gamma)\alpha_{\tau(\gamma)^{-1}p(v)}'(0)
= \tau(\gamma)u(\gamma^{-1}v)\]
Hence $\{\beta_x:x\in X\}$ is a rigid motion of $(X,\|\cdot\|)$ which satisfies,
\[(\beta'_{p(v)}(0))_{v\in V} = (\tau\otimes P_V)(\gamma)u\]
We conclude that $(\tau\otimes P_V)(\gamma)u\in \T(G,p)$ and so $\T(G,p)$ is  $\tau \otimes P_V$-invariant.
\endproof

We denote by $(\tau \otimes P_V)^{(\T)}$ the subrepresentation  of $\tau \otimes P_V$ with representation space $\T(G,p)$.
Recall that the \emph{character} of a representation $\rho:\Gamma \to \GL(X)$ is the row vector $\chi(\rho)$ whose $i$-th component is the trace of $\rho(\gamma_i)$, for some fixed ordering $\gamma_1,\ldots, \gamma_{|\Gamma|}$ of the elements of $\Gamma$.

\begin{theorem}
\label{maxwellsrulewithchar} Let $(G,p)$ be a well-positioned bar-joint framework in $(X,\|\cdot\|)$
which is $\Gamma$-symmetric with respect to $\theta$ and $\tau$.
If  $(G,p)$ is isostatic then,
\begin{equation}
\label{maxchar}
\chi(P_E) =\chi(\tau \otimes P_V) - \chi((\tau \otimes P_V)^{(\T)}).
\end{equation}
\end{theorem}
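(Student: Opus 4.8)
The key observation is Proposition~\ref{IntertwiningLemma}, which tells us that $df_G(p) = R(G,p)$ is a $\Gamma$-linear map from the representation $\tau\otimes P_V$ (on $X^{|V|}$) to the representation $P_E$ (on $\bR^{|E|}$). Since $(G,p)$ is isostatic, Theorem~\ref{thm:maxwell}(ii) tells us that $R(G,p)$ has trivial cokernel (it is row-independent, hence surjective) and that $\ker R(G,p) = \F(G,p) = \T(G,p)$. I would therefore set up a short exact sequence of $\Gamma$-representations
\begin{equation*}
0 \longrightarrow \T(G,p) \stackrel{\iota}{\longrightarrow} X^{|V|} \stackrel{R(G,p)}{\longrightarrow} \bR^{|E|} \longrightarrow 0,
\end{equation*}
where the first map is the inclusion and the second is the rigidity matrix. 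The point is that every map here intertwines the relevant representations: the inclusion $\iota$ intertwines $(\tau\otimes P_V)^{(\T)}$ with $\tau\otimes P_V$ by the preceding proposition (which established that $\T(G,p)$ is $\tau\otimes P_V$-invariant), and $R(G,p)$ intertwines $\tau\otimes P_V$ with $P_E$ by Proposition~\ref{IntertwiningLemma}.

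**Next** I would invoke the standard fact from representation theory that characters are additive on short exact sequences of representations over a field of characteristic zero (equivalently, that $X^{|V|}$ decomposes, as a $\Gamma$-module, as a direct sum of a copy of $\T(G,p)$ and a complement on which $\Gamma$ acts equivalently to $P_E$ via $R(G,p)$). Concretely: choose a $\Gamma$-invariant complement $W$ to $\T(G,p)$ in $X^{|V|}$ (which exists because $\Gamma$ is finite, by Maschke's theorem / an averaging argument over the finite group $\Gamma$), so that $X^{|V|} = \T(G,p)\oplus W$ as $\tau\otimes P_V$-modules. Then $R(G,p)$ restricts to a $\Gamma$-linear isomorphism $W\to\bR^{|E|}$ because it kills $\T(G,p)$ and is surjective with $\dim W = \dim X^{|V|} - \dim\T(G,p) = |E|$ by the isostaticity count in Theorem~\ref{thm:maxwell}(ii). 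Consequently $W\cong \bR^{|E|}$ as $\Gamma$-representations, so they have equal characters, $\chi(P_E) = \chi\big((\tau\otimes P_V)|_W\big)$. Adding back the character of the complementary summand gives
\begin{equation*}
\chi(\tau\otimes P_V) = \chi\big((\tau\otimes P_V)^{(\T)}\big) + \chi(P_E),
\end{equation*}
which rearranges to the claimed identity (\ref{maxchar}).

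**The only genuine subtlety** is making sure the finiteness of $\Gamma$ is actually available, since $\Gamma$ was only assumed to be a group acting by linear isometries; here one should note that either $\Gamma$ is taken finite in this context (as in the point-group setting flagged in the introduction) or, more carefully, that the representation $\tau\otimes P_V$ factors through the finite quotient of $\Gamma$ acting faithfully on the finite vertex set together with its finite image in $\GL(X)$ — in any case, one only needs Maschke's theorem for a finite group, or equivalently the existence of a $\Gamma$-invariant inner product obtained by averaging, to split off the invariant subspace $\T(G,p)$. Everything else is formal: the intertwining properties come directly from the two propositions already proved, and the dimension bookkeeping is exactly Theorem~\ref{thm:maxwell}(ii). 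I do not anticipate any computational difficulty; the proof is essentially a two-line consequence of the intertwining lemma plus additivity of characters on the short exact sequence.
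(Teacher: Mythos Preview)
Your proposal is correct and follows essentially the same argument as the paper: both invoke Maschke's theorem to obtain a $(\tau\otimes P_V)$-invariant complement to $\T(G,p)$, observe that $df_G(p)=R(G,p)$ restricts to a $\Gamma$-linear isomorphism from that complement onto $\bR^{|E|}$ (using isostaticity and Proposition~\ref{IntertwiningLemma}), and then read off the character identity. Your framing via a short exact sequence is a mild repackaging but not a different idea; your remark on the finiteness of $\Gamma$ is a fair caveat that the paper leaves implicit.
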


\proof
By Maschke's Theorem, $\T(G,p)$ has a $\tau \otimes P_V$-invariant complement $Q$ in $X^{|V|}$. We may therefore form the subrepresentation $(\tau \otimes P_V)^{(Q)}$ of $\tau \otimes P_V$ with representation space $Q$.
Since  $(G,p)$ is isostatic, the restriction of the differential $df_G(p)$ to $Q$ is an isomorphism onto $\bR^{|E|}$.
Moreover, since  $df_G(p)$ is $\Gamma$-linear with respect to the representations
$\tau \otimes P_V$ and $P_E$, this restriction is $\Gamma$-linear for the representations
$(\tau \otimes P_V)^{(Q)}$ and $P_E$.
Hence $(\tau \otimes P_V)^{(Q)}$ and $P_E$ are isomorphic representations of $\Gamma$.
We conclude that,
\[\chi(P_E) =\chi( (\tau \otimes P_V)^{(Q)})
=\chi(\tau \otimes P_V) - \chi((\tau \otimes P_V)^{(\T)}).\]
\endproof

Let $\theta:\Gamma\to\Aut(G)$ be a group action on $G$.
A vertex $v$ of $G$ is said to be {\em fixed} by $\gamma\in \Gamma$ (with respect to $\theta$) if $\gamma v =v$. Similarly, an edge $e=vw$ of $G$ is said to be fixed by $\gamma\in \Gamma$ (with respect to $\theta$) if $\gamma e =e$, i.e., if either $\gamma v =v$ and $\gamma w =w$, or, $\gamma v =w$ and $\gamma w =v$.
The sets of vertices and edges of a $\Gamma$-symmetric graph $G$ which are fixed by $\gamma\in \Gamma$ are denoted by $V_{\gamma}$ and  $E_{\gamma}$, respectively.

\begin{corollary}
\label{maxwellcor}
Let $(G,p)$ be a well-positioned bar-joint framework in $(X,\|\cdot\|)$
which is $\Gamma$-symmetric with respect to $\theta$ and $\tau$.
If  $(G,p)$ is isostatic then for each $\gamma\in \Gamma$,
\begin{eqnarray}|E_\gamma| = \tr(\tau(\gamma))\,|V_\gamma|-\tr((\tau \otimes P_V)^{(\T)}(\gamma)).\end{eqnarray}
\end{corollary}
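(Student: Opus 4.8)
The plan is to obtain this as an immediate consequence of Theorem~\ref{maxwellsrulewithchar} by evaluating the character identity (\ref{maxchar}) in the $\gamma$-th coordinate. Recall that the $i$-th component of a character $\chi(\rho)$ is the trace $\tr(\rho(\gamma_i))$, so the identity (\ref{maxchar}) of row vectors holds if and only if the scalar equation $\tr(P_E(\gamma)) = \tr((\tau\otimes P_V)(\gamma)) - \tr((\tau\otimes P_V)^{(\T)}(\gamma))$ holds for every $\gamma\in\Gamma$. Thus the whole task reduces to identifying the first two traces with $|E_\gamma|$ and $\tr(\tau(\gamma))\,|V_\gamma|$ respectively.

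First I would handle $\tr(P_E(\gamma))$. Since $P_E(\gamma)$ acts on $\bR^{|E|}$ by permuting coordinates via $e\mapsto\gamma^{-1}e$, it is a permutation matrix whose diagonal entry indexed by $e$ equals $1$ when $\gamma e = e$ and $0$ otherwise. Its trace therefore counts the edges fixed by $\gamma$, so $\tr(P_E(\gamma)) = |E_\gamma|$. Next I would compute $\tr((\tau\otimes P_V)(\gamma))$ by writing the operator in block form with respect to the decomposition $X^{|V|}=\bigoplus_{v\in V}X$. From the formula $(\tau\otimes P_V)(\gamma)(x_v)_{v\in V}=(\tau(\gamma)(x_{\gamma^{-1}v}))_{v\in V}$, the block sending the $w$-th input copy of $X$ to the $v$-th output copy is $\tau(\gamma)$ if $w=\gamma^{-1}v$ and is $0$ otherwise. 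Only diagonal blocks contribute to the trace, and the $(v,v)$-block is nonzero precisely when $\gamma^{-1}v=v$, i.e.\ when $v\in V_\gamma$, in which case it equals $\tau(\gamma)$ and contributes $\tr(\tau(\gamma))$. Summing over $v$ gives $\tr((\tau\otimes P_V)(\gamma))=|V_\gamma|\,\tr(\tau(\gamma))$. The remaining term $\tr((\tau\otimes P_V)^{(\T)}(\gamma))$ is already in the desired form, being by definition the $\gamma$-component of $\chi((\tau\otimes P_V)^{(\T)})$. Substituting these three expressions into the $\gamma$-component of (\ref{maxchar}) yields the asserted identity.

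Every step here is a short and routine computation, so I do not expect a genuine obstacle; the only point that warrants a moment's care is the block-matrix trace of the twisted tensor representation $\tau\otimes P_V$. One must keep track of the fact that the condition picking out the surviving diagonal blocks is $\gamma v = v$ (the vertex-fixing condition), not anything involving the edge action, and that each such surviving block is exactly $\tau(\gamma)$ rather than a conjugate of it, so that the contributions genuinely sum to $|V_\gamma|\,\tr(\tau(\gamma))$.
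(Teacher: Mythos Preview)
Your proposal is correct and follows essentially the same route as the paper: deduce the scalar identity from Theorem~\ref{maxwellsrulewithchar} by computing the $\gamma$-component of each character, using $\tr(P_E(\gamma))=|E_\gamma|$ and $\tr((\tau\otimes P_V)(\gamma))=\tr(\tau(\gamma))\,|V_\gamma|$. The only cosmetic difference is that the paper invokes the tensor-product trace identity $\tr((\tau\otimes P_V)(\gamma))=\tr(\tau(\gamma))\,\tr(P_V(\gamma))$ together with $\tr(P_V(\gamma))=|V_\gamma|$, whereas you verify this directly via the block-matrix description.
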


\proof
Note that $\tr(P_V(\gamma)) = |V_\gamma|$  and $\tr(P_E(\gamma))=|E_\gamma|$
for each $\gamma\in\Gamma$.
The result now follows from Theorem \ref{maxwellsrulewithchar} and the fact that $\tr((\tau\otimes P_V)(\gamma))=\tr(\tau(\gamma))\tr(P_V(\gamma))$.
\endproof

\section{Normed spaces with finitely many linear isometries.}
\label{sec:FiniteIsometryGroup}
In many cases (such as for the non-Euclidean $\ell^p$ norms and polyhedral norms) the group of linear isometries of a normed vector space is a finite group.
In this section we present necessary counting conditions for isostatic bar-joint frameworks in such normed spaces and in the presence of a variety of  symmetry operations.
We use the Schoenflies notation for the symmetry groups $\Gamma$ and  symmetry operations $\gamma\in \Gamma$ considered in this section, as this is one of the standard notations in the literature about symmetric structures \cite{bishop,FGsymmax,KG2,schulze}.

\begin{proposition}
\label{FiniteIsometryGroup}
Let $(G,p)$ be a well-positioned bar-joint framework in $(X,\|\cdot\|)$
which is isostatic and $\Gamma$-symmetric with respect to $\theta$ and $\tau$.
If the group of linear isometries of $(X,\|\cdot\|)$ is finite then,
\begin{enumerate}[(i)]
\item $|E| = \dim (X)\,(|V|-1)$, and,
\item $|E_\gamma| = \tr(\tau(\gamma))\,(|V_\gamma|-1)$ for each $\gamma\in \Gamma$.
\end{enumerate}
\end{proposition}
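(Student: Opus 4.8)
The plan is to reduce both statements to the general Maxwell counts already established in Theorem~\ref{thm:maxwell} together with the symmetry-adapted version in Corollary~\ref{maxwellcor}, the only extra ingredient being a computation of the space $\T(G,p)$ of trivial infinitesimal flexes under the finiteness hypothesis. The key observation is this: if the group of linear isometries of $(X,\|\cdot\|)$ is finite, then the only rigid motions of $(X,\|\cdot\|)$ are translations. Indeed, a rigid motion $\{\alpha_x\}$ gives, for each fixed $t\in[-1,1]$, an isometry $x\mapsto\alpha_x(t)$ of $(X,\|\cdot\|)$; by the Mazur--Ulam theorem every such isometry is affine, hence of the form $x\mapsto L_t x + c_t$ with $L_t$ a linear isometry. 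Since the group of linear isometries is finite and $t\mapsto L_t$ is continuous (as $\alpha_x$ is continuous in $t$) with $L_0=\mathrm{id}$, we get $L_t=\mathrm{id}$ for all $t$, so $\alpha_x(t)=x+c_t$ with $c_0=0$. Differentiating at $t=0$ shows $\alpha_{p(v)}'(0)=c_0'(0)$ is independent of $v$, i.e. every trivial infinitesimal flex is a constant vector field $(u)_{v\in V}$ with $u\in X$. Conversely each such constant field is the derivative of the translation rigid motion $\alpha_x(t)=x+tu$. Hence $\T(G,p)=\{(u)_{v\in V}: u\in X\}\cong X$, so $\dim\T(G,p)=\dim(X)$.

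Part~(i) is then immediate: by Theorem~\ref{thm:maxwell}(ii), an isostatic well-positioned framework satisfies $|E|=\dim(X)\,|V|-\dim\T(G,p)=\dim(X)\,|V|-\dim(X)=\dim(X)(|V|-1)$.

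For part~(ii) I would feed the explicit description of $\T(G,p)$ into the representation $(\tau\otimes P_V)^{(\T)}$. Under the identification $\T(G,p)\cong X$ given by $(u)_{v\in V}\mapsto u$, the action of $(\tau\otimes P_V)(\gamma)$ on a constant field $(u)_{v\in V}$ sends it to $(\tau(\gamma)u)_{v\in V}$ — still a constant field — so $(\tau\otimes P_V)^{(\T)}$ is isomorphic as a $\Gamma$-representation to $\tau$ itself. Therefore $\tr\bigl((\tau\otimes P_V)^{(\T)}(\gamma)\bigr)=\tr(\tau(\gamma))$ for every $\gamma\in\Gamma$, and substituting this into the formula of Corollary~\ref{maxwellcor} gives $|E_\gamma|=\tr(\tau(\gamma))\,|V_\gamma|-\tr(\tau(\gamma))=\tr(\tau(\gamma))(|V_\gamma|-1)$, as claimed.

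The only genuine obstacle is the first step, namely justifying that rigid motions of $(X,\|\cdot\|)$ are exactly the translations when the linear isometry group is finite; once the Mazur--Ulam theorem and the continuity-plus-finiteness argument pin down $L_t\equiv\mathrm{id}$, everything else is a routine bookkeeping of characters. One should be slightly careful that the definition of rigid motion given in the text only requires continuity of $t\mapsto\alpha_x(t)$ and differentiability at $t=0$, which is exactly what is needed for the continuity argument on $L_t$ near $t=0$ and hence (by the group structure of isometries, or by connectedness of a neighbourhood of $0$) on all of $[-1,1]$; alternatively one can argue directly that $L_t$ lies in the finite set of linear isometries for each $t$ and is locally constant, hence constant on the interval, equal to its value $\mathrm{id}$ at $t=0$.
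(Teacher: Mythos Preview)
Your proposal is correct and follows essentially the same route as the paper: identify $\T(G,p)$ with the constant (translational) fields so that $(\tau\otimes P_V)^{(\T)}\cong\tau$, then read off both counts from Theorem~\ref{thm:maxwell}(ii) and Corollary~\ref{maxwellcor}. The only difference is cosmetic---the paper outsources the ``trivial flexes are translations'' step to \cite[Lemma~2.5]{kit-pow}, whereas you supply a self-contained Mazur--Ulam/connectedness argument for it.
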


\proof
If the group of linear isometries of $(X,\|\cdot\|)$ is finite then, by \cite[Lemma 2.5]{kit-pow},
$\T(G,p)$ consists of translational infinitesimal flexes only, i.e. $\T(G,p)=\{(a,\ldots,a)\in X^{|V|}:a\in X\}$.
Thus $(\tau \otimes P_V)^{(\T)}(\gamma)=(\tau\otimes {\bf 1})(\gamma)$ for each $\gamma\in \Gamma$ and so $\tr((\tau \otimes P_V)^{(\T)}(\gamma))=\tr(\tau(\gamma))$. The result now follows from Corollary
\ref{maxwellcor}.
\endproof

In Tables~\ref{tab:2D} and \ref{tab:3D}, we present character tables for the representations appearing in Formula (\ref{maxchar}). These tables apply respectively to frameworks in  two-dimensional and three-dimensional normed spaces for which the group of linear isometries is finite. The symmetry operations for frameworks in these non-Euclidean spaces are defined below.

\begin{table}[htp]\begin{center}
    \begin{tabular}{l|c c c c}
                        & $\phantom{-}Id$       &  $C_{n > 2}$   &
                            $\phantom{-}C_2$   & $\phantom{-}s$  \\ \hline\hline
    $\chi(P_E)$& $|E|$     & $|E_n|$             &
                            $|E_2|$   & $|E_s|$\\\hline
    $\chi(\tau\otimes P_V)$&   $2|V|$     & $(2\cos \frac{2\pi}{n})|V_n|$       &
                            $-2|V_2|$    & $0$       \\ \hline
    $\chi((\tau\otimes P_V)^{(\T)})$
                        & $2$      & $2\cos \frac{2\pi}{n}$  &
                            $-2$& $0$       \\
        \end{tabular}
    \caption{Calculations of characters for the symmetry-extended counting rule for isostatic frameworks in a two-dimensional non-Euclidean space.}
    \label{tab:2D}
    \end{center}
\end{table}

\begin{table}[htp]\begin{center}
    \begin{tabular}{l|c c c c c c}
                        & $\phantom{-}Id$       &  $C_{n > 2}$   &
                            $\phantom{-}C_2$   & $\phantom{-}s$ & $i$ & $S_{n>2}$ \\ \hline\hline
    $\chi(P_E)$& $|E|$     & $|E_n|$             &
                            $|E_2|$   & $|E_s|$  & $|E_i|$   & $|E_{S_n}|$ \\\hline
    $\chi(\tau\otimes P_V)$&   $3|V|$     & $(2\cos \frac{2\pi}{n}+1)|V_n|$       &
                            $-|V_2|$    & $|V_s|$    & $-3|V_i|$ &   $(2\cos \frac{2\pi}{n}-1)|V_{S_n}|$   \\ \hline
    $\chi((\tau\otimes P_V)^{(\T)})$
                        & $3$      & $2\cos \frac{2\pi}{n}+1$  &
                            $-1$ & $1$  & $-3$ &  $2\cos \frac{2\pi}{n}-1$     \\
        \end{tabular}
    \caption{Calculations of characters for the symmetry-extended counting rule for isostatic frameworks in a three-dimensional non-Euclidean space.}
    \label{tab:3D}
    \end{center}
\end{table}

\subsection{Reflections and inversions.}
Let $(G,p)$ be a bar-joint framework in $X$ which is $\Gamma$-symmetric with respect to $\theta$ and $\tau$.
A symmetry operation $s\in \Gamma$ is called a {\em reflection} if $\tau(s)=I-2P$ where $P$ is a rank one projection on $X$. A symmetry group which is generated by a reflection $s$ is denoted by $\mathcal{C}_{s}$.

\begin{corollary}
\label{Reflection}
Let $(G,p)$ be a well-positioned bar-joint framework in $(X,\|\cdot\|)$
which is isostatic and $\Gamma$-symmetric with respect to $\theta$ and $\tau$.
Suppose the group of linear isometries of $(X,\|\cdot\|)$ is finite.
If the symmetry group $\Gamma$ contains a reflection $s\in \Gamma$ then,
\begin{enumerate}[(i)]
\item $|E_s|=(\dim(X)-2)\,(|V_s|-1)$.
\item If $\dim(X)=2$ then $|E_s|=0$.
\item If $\dim(X)\geq3$ then the following two conditions hold,
\begin{enumerate}[(a)]
\item $|V_s|\geq1$, and,
\item $|V_s|=1$ if and only if $|E_s|=0$.
\end{enumerate}
\end{enumerate}
\end{corollary}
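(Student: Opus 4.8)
The plan is to extract everything from Proposition \ref{FiniteIsometryGroup}(ii) by computing $\tr(\tau(s))$ for a reflection. First I would observe that if $s$ is a reflection with $\tau(s) = I - 2P$ for a rank one projection $P$ on $X$, then $\tr(\tau(s)) = \tr(I) - 2\tr(P) = \dim(X) - 2$, since a rank one projection has trace $1$. Substituting this into the identity $|E_s| = \tr(\tau(s))\,(|V_s|-1)$ from Proposition \ref{FiniteIsometryGroup}(ii) immediately yields part (i): $|E_s| = (\dim(X)-2)(|V_s|-1)$.

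Part (ii) is then the special case $\dim(X)=2$: the factor $\dim(X)-2$ vanishes, so $|E_s|=0$ regardless of $|V_s|$. For part (iii), assume $\dim(X)\geq 3$, so the coefficient $\dim(X)-2$ is a strictly positive integer. For (a), I would argue by contradiction: $|V_s|\geq 0$ always, and if $|V_s|=0$ then (i) would give $|E_s| = -(\dim(X)-2) < 0$, which is impossible since $|E_s|$ counts edges; hence $|V_s|\geq 1$. For (b), with $c:=\dim(X)-2>0$ we have $|E_s| = c(|V_s|-1)$, and since $c\neq 0$ this product is zero exactly when $|V_s|-1=0$, i.e.\ $|V_s|=1$; this gives both implications at once.

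The only point requiring a little care — though it is hardly an obstacle — is confirming that Proposition \ref{FiniteIsometryGroup}(ii) genuinely applies: its hypotheses (well-positioned, isostatic, $\Gamma$-symmetric with respect to $\theta$ and $\tau$, finite linear isometry group) are exactly those assumed in the corollary, and the reflection $s$ is by definition an element of $\Gamma$, so the formula holds with $\gamma = s$. Everything else is elementary arithmetic with the trace of a rank one projection and sign considerations for nonnegative integer counts, so I would expect the write-up to be short.
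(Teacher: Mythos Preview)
Your proposal is correct and follows essentially the same approach as the paper: compute $\tr(\tau(s))=\dim(X)-2$ from $\tau(s)=I-2P$ with $P$ rank one, invoke Proposition~\ref{FiniteIsometryGroup}(ii) to obtain (i), and then read off (ii) and (iii) as immediate arithmetic consequences. The paper's write-up is terser (it simply says the remaining parts are ``simple consequences of (i)''), but your more explicit treatment of the sign argument for (iii)(a) and the equivalence in (iii)(b) is entirely in line with what is intended.
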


\proof
Note that $\tau(s) = I-2P$ where  $P$ is a rank one projection and so $\tr(\tau(s))=\dim(X)-2$.
Thus $(i)$ follows from Proposition \ref{FiniteIsometryGroup}.
The remaining conditions are simple consequences of $(i)$.
\endproof

A symmetry operation $i\in\Gamma$ is called an {\em inversion} if $\tau(i)=-I$.

\begin{corollary}
Let $(G,p)$ be a well-positioned bar-joint framework in $(X,\|\cdot\|)$
which is isostatic and $\Gamma$-symmetric with respect to $\theta$ and $\tau$.
Suppose the group of linear isometries of $(X,\|\cdot\|)$ is finite.
If the symmetry group $\Gamma$ contains an inversion  $i\in \Gamma$ then
one of the following two conditions holds,
\begin{enumerate}[(i)]
\item $|V_i| =0$, and, $|E_i|=\dim (X)$.
\item $|V_i| =1$, and, $|E_i|=0$.
\end{enumerate}
\end{corollary}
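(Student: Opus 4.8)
The plan is to apply Proposition~\ref{FiniteIsometryGroup}(ii) directly to the inversion $i\in\Gamma$, for which $\tau(i)=-I$ and hence $\tr(\tau(i))=-\dim(X)$. Substituting this into the formula $|E_i|=\tr(\tau(i))\,(|V_i|-1)$ gives
\[|E_i| = -\dim(X)\,(|V_i|-1) = \dim(X)\,(1-|V_i|).\]
Since $|E_i|\geq 0$ and $\dim(X)\geq 1$, the right-hand side forces $1-|V_i|\geq 0$, i.e. $|V_i|\leq 1$, leaving exactly the two cases $|V_i|=0$ and $|V_i|=1$.

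In the case $|V_i|=0$, the displayed identity immediately yields $|E_i|=\dim(X)\cdot 1 = \dim(X)$, giving condition~(i). In the case $|V_i|=1$, it yields $|E_i|=\dim(X)\cdot 0 = 0$, giving condition~(ii). This exhausts all possibilities, so one of the two conditions must hold.

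The one point that deserves a word of care — the only place where anything beyond arithmetic enters — is the justification that $|V_i|\le 1$, or equivalently that the inversion cannot fix two or more vertices while the framework is isostatic. This is handled entirely by the sign constraint above: because $\tr(\tau(i))=-\dim(X)<0$, the product $\tr(\tau(i))\,(|V_i|-1)$ is nonnegative if and only if $|V_i|\le 1$. (Geometrically, an inversion fixes at most one point of $X$, namely the centre of inversion, but we do not even need this observation; the representation-theoretic count in Proposition~\ref{FiniteIsometryGroup} already encodes it.) So there is no real obstacle here; the corollary is a short specialisation of Proposition~\ref{FiniteIsometryGroup}(ii), exactly parallel to how Corollary~\ref{Reflection} specialises it for reflections.
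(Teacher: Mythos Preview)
Your proof is correct and follows essentially the same approach as the paper: compute $\tr(\tau(i))=-\dim(X)$, apply Proposition~\ref{FiniteIsometryGroup}(ii) to obtain $|E_i|=-\dim(X)(|V_i|-1)$, and use nonnegativity of $|E_i|$ to force $|V_i|\le 1$. The paper compresses the last step into ``The result follows,'' but the reasoning is identical.
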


\proof
Note that $\tau(i)$ has trace $-\dim(X)$ and so, by Proposition \ref{FiniteIsometryGroup},
$|E_i| =  -\dim (X)\,(|V_i|-1)$. The result follows.
\endproof

\subsection{Half-turn and $4$-fold rotations.}
A symmetry operation $C_n\in\Gamma$ is called an {\em $n$-fold rotation} ($n \geq 2$) if there exists a two-dimensional subspace $Y$ of $X$ with a complementary space $Z$ (which could be $0$) such that $\tau(C_n)=S\oplus I_Z$ where
$S:Y\to Y$ has matrix representation $${\left(\begin{array}{cc} \cos(2\pi /n) &  -\sin(2\pi /n) \\  \sin(2\pi /n) &  \cos(2\pi /n)\end{array}\right)}$$
with respect to some basis for $Y$ and $I_Z$ is the identity operator on $Z$.
In the case $n=2$, $C_2$ is also called a {\em half-turn rotation}.
A symmetry group which is generated by an $n$-fold rotation $C_n$ is denoted by $\mathcal{C}_{n}$.
In the following, $|V_n|$, and $|E_n|$ denote respectively the numbers of vertices and edges that are fixed by an $n$-fold rotation $C_n$.

\begin{corollary}
\label{2Fold}
Let $(G,p)$ be a well-positioned bar-joint framework in $(X,\|\cdot\|)$
which is isostatic and $\Gamma$-symmetric with respect to $\theta$ and $\tau$.
Suppose the group of linear isometries of $(X,\|\cdot\|)$ is finite.
If the symmetry group $\Gamma$ contains a half-turn rotation $C_2\in \Gamma$ then,
\begin{enumerate}[(i)]
\item $|E_2| =  \left(\dim (X)-4\right)\,(|V_2|-1)$.
\item If $\dim (X)=2$, then one of the following two conditions holds,
\begin{enumerate}[(a)]
\item  $|V_2| =0$, and, $|E_2|=2$.
\item $|V_2| =1$, and, $|E_2|=0$.
\end{enumerate}
\item If  $\dim (X)=3$, then one of the following two conditions holds,
\begin{enumerate}[(a)]
\item  $|V_2| =0$, and, $|E_2|=1$.
\item $|V_2| =1$, and, $|E_2|=0$.
\end{enumerate}
\item If  $\dim (X)=4$, then  $|E_2|=0$.
\item If  $\dim (X)\geq5$, then the following two conditions hold,
\begin{enumerate}[(a)]
\item  $|V_2| \geq1$, and,
\item $|V_2| =1$ if and only if $|E_2|=0$.
\end{enumerate}
\end{enumerate}
\end{corollary}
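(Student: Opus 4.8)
The plan is to reduce everything to Proposition~\ref{FiniteIsometryGroup}(ii) applied to the symmetry operation $\gamma = C_2$, after which all five assertions become elementary arithmetic.

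First I would compute $\tr(\tau(C_2))$. By the definition of an $n$-fold rotation with $n=2$, we have $\tau(C_2) = S\oplus I_Z$ where $S:Y\to Y$ is the rotation through $2\pi/2 = \pi$, so $S = -I_Y$ on the two-dimensional space $Y$, and $Z$ is a complementary subspace of dimension $\dim(X)-2$. Hence $\tr(\tau(C_2)) = -2 + (\dim(X)-2) = \dim(X)-4$. Substituting this into the identity $|E_\gamma| = \tr(\tau(\gamma))\,(|V_\gamma|-1)$ from Proposition~\ref{FiniteIsometryGroup}(ii) immediately yields (i), namely $|E_2| = (\dim(X)-4)(|V_2|-1)$.

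For (ii)--(v) I would then simply analyse the non-negative integer solutions of this equation. When $\dim(X)<4$ the coefficient $\dim(X)-4$ is negative, so $|E_2|\geq 0$ forces $|V_2|\leq 1$, leaving only the cases $|V_2|\in\{0,1\}$; plugging these in gives the stated value of $|E_2|$ in each case (for $\dim(X)=2$ one gets $|E_2|=2$ or $|E_2|=0$; for $\dim(X)=3$ one gets $|E_2|=1$ or $|E_2|=0$), which are (ii) and (iii). When $\dim(X)=4$ the coefficient vanishes and the equation reads $|E_2|=0$, which is (iv). When $\dim(X)\geq 5$ the coefficient is positive, so $|E_2|\geq 0$ forces $|V_2|\geq 1$, and since $\dim(X)-4\neq 0$ we have $|E_2|=0$ if and only if $|V_2|-1=0$; this is (v).

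There is no substantial obstacle here: the entire content lies in Proposition~\ref{FiniteIsometryGroup}, which in turn rests on \cite[Lemma 2.5]{kit-pow} (that a finite linear isometry group forces $\T(G,p)$ to consist of translations only). The only points requiring any care are getting the trace of $\tau(C_2)$ right --- in particular noticing that the $2\times 2$ rotation block degenerates to $-I$ when $n=2$, so the trace is $\dim(X)-4$ rather than $\dim(X)$ --- and exhausting the (very short) list of non-negative integer solutions in each of the dimension ranges $\dim(X)<4$, $\dim(X)=4$, and $\dim(X)\geq 5$.
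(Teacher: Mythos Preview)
Your argument is correct and is exactly the paper's approach: compute $\tr(\tau(C_2))=\dim(X)-4$ and invoke Proposition~\ref{FiniteIsometryGroup} to obtain (i), then read off (ii)--(v) as elementary consequences of (i). The paper's proof says precisely this, only more tersely.
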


\begin{proof}
The trace of $\tau(C_2)$ is $\dim(X)-4$ and so $(i)$ follows from Proposition \ref{FiniteIsometryGroup}. The remaining parts are simple consequences of $(i)$.
\end{proof}

The following corollary presents necessary counting conditions in the presence of a four-fold rotation $C_4$.

\begin{corollary}
\label{4Fold}
Let $(G,p)$ be a well-positioned bar-joint framework in $(X,\|\cdot\|)$
which is isostatic and $\Gamma$-symmetric with respect to $\theta$ and $\tau$.
Suppose the group of linear isometries of $(X,\|\cdot\|)$ is finite.
If the symmetry group $\Gamma$ contains a  four-fold rotation $C_4\in \Gamma$ then,
\begin{enumerate}[(i)]
\item $|E_4| =  \left(\dim (X)-2\right)\,(|V_4|-1)$.
\item If  $\dim (X)=2$, then  $|V_4|\leq 1$ and $|E_4|=0$.
\item If  $\dim (X)=3$ or $\dim (X)=4$, then $|V_4|=1$ and $|E_4|=0$.
\item If  $\dim (X)\geq5$, then the following two conditions hold,
\begin{enumerate}[(a)]
\item  $|V_4| \geq1$, and,
\item $|V_4| =1$ if and only if $|E_4|=0$.
\end{enumerate}
\end{enumerate}
\end{corollary}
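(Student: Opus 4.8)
The plan is to reduce everything to Proposition~\ref{FiniteIsometryGroup}(ii), exactly as in the proof of Corollary~\ref{2Fold}. First I would compute the trace of $\tau(C_4)$. By definition of a four-fold rotation, $\tau(C_4)=S\oplus I_Z$ where $S$ acts on a two-dimensional subspace $Y$ by the rotation matrix with angle $2\pi/4=\pi/2$, so $\tr(S)=2\cos(\pi/2)=0$, while $I_Z$ contributes $\dim(X)-2$. Hence $\tr(\tau(C_4))=\dim(X)-2$, and Proposition~\ref{FiniteIsometryGroup}(ii) immediately yields
\[|E_4|=(\dim(X)-2)(|V_4|-1),\]
which is part~(i). (One could also read this directly off the $C_{n>2}$ column of Table~\ref{tab:2D}, resp.\ Table~\ref{tab:3D}, with $n=4$.)

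The remaining parts are then purely arithmetic consequences of~(i), together with the trivial observation that $|V_4|\geq 0$ and $|E_4|\geq 0$ are non-negative integers. For~(ii), when $\dim(X)=2$ the coefficient $\dim(X)-2=0$, so $|E_4|=0$ regardless; moreover if $|V_4|\geq 2$ then $(i)$ would force $|E_4|=0\cdot(|V_4|-1)$, which is consistent, so that argument alone does not bound $|V_4|$. To get $|V_4|\leq 1$ I would instead note that in a two-dimensional space a nontrivial rotation $\tau(C_4)$ fixes only the origin, and since the points $p(v)$ are distinct and $\tau(C_4)(p(v))=p(v)$ for every $v\in V_4$, at most one vertex can lie at the unique fixed point; hence $|V_4|\leq 1$. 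For~(iii), when $\dim(X)=3$ or $4$ the coefficient $\dim(X)-2$ is $1$ or $2$, in particular positive, so from~(i) we get $|E_4|=(\dim(X)-2)(|V_4|-1)\geq 0$, forcing $|V_4|\geq 1$; combined with the geometric bound that a rotation about an axis (a one-dimensional fixed space when $\dim X=3$, a two-dimensional one when $\dim X=4$, but the rotation block on $Y$ still has only $0$ as a fixed point) still permits only one framework vertex at a fixed point, we conclude $|V_4|=1$ and therefore $|E_4|=0$. For~(iv), when $\dim(X)\geq 5$ the coefficient is $\geq 3>0$, so $|E_4|\geq 0$ forces $|V_4|\geq 1$, and since $\dim(X)-2\neq 0$ equation~(i) gives $|E_4|=0$ if and only if $|V_4|=1$.

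The only genuine subtlety is the claim that at most one vertex of the framework can be fixed by the rotation $C_4$; this is where the hypothesis that the components $p(v)$ of the placement are distinct is used, via the fact that the rotation block $S$ on the two-dimensional subspace $Y$ has $0$ as its unique fixed vector (because $\pi/2$ is not a multiple of $2\pi$), so the fixed-point set of $\tau(C_4)$ on $X$ is the complementary subspace $Z$ on which $\tau(C_4)$ acts as the identity\,---\,and two distinct fixed vertices would have to be distinct points of $Z$, which is allowed unless $\dim Z=0$. Thus the bound $|V_4|\le 1$ really only holds cleanly when $\dim(X)\le 2$ (so $Z=\{0\}$); for $\dim(X)=3,4$ one must argue slightly differently, using that the positivity of $\dim(X)-2$ in~(i) already forces $|V_4|\ge 1$ and any value $|V_4|\ge 2$ would be permitted by~(i) only with $|E_4|\ge \dim(X)-2\ge 1$, which one rules out by a separate argument (e.g.\ an edge fixed by $C_4$ in a space where $C_4$ fixes no line through two joints is impossible). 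I expect making this last point fully rigorous\,---\,reconciling the algebraic count with the geometry of which joints and bars a $4$-fold rotation can fix\,---\,to be the main point requiring care; the rest is a direct substitution into Proposition~\ref{FiniteIsometryGroup}.
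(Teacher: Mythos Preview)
Your treatment of~(i) and~(iv) is correct and matches the paper. For~(ii) your geometric argument (in dimension~$2$ the fixed-point set of $\tau(C_4)$ is $\{0\}$, so at most one placed vertex can be fixed) is valid and is actually a different route from the paper, which instead observes that $(C_4)^2$ is a half-turn and invokes Corollary~\ref{2Fold}(ii) to get $|V_2|\le 1$, hence $|V_4|\le |V_2|\le 1$.

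However, for~(iii) your proposal has a genuine gap, and you correctly sense it but do not close it. In dimensions~$3$ and~$4$ the fixed-point set $Z=\ker(I-\tau(C_4))$ has dimension~$1$ or~$2$, so there is \emph{no} geometric obstruction to placing several distinct vertices in~$Z$; your claim that the rotation ``still permits only one framework vertex at a fixed point'' is simply false for $\dim X\ge 3$. Your fallback idea of ruling out fixed edges geometrically also fails: in dimension~$3$ an edge with both endpoints on the rotation axis is fixed by $C_4$, and nothing geometric forbids this. The bound $|V_4|=1$ in these dimensions is \emph{not} a geometric fact\,---\,it is a consequence of isostaticity, and your argument never uses isostaticity beyond the single equation~(i).

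The missing idea, which the paper uses, is that $(C_4)^2\in\Gamma$ is a half-turn rotation $C_2$, so Corollary~\ref{2Fold} applies to it. Any vertex or edge fixed by $C_4$ is also fixed by $(C_4)^2$, giving $|V_4|\le |V_2|$ and $|E_4|\le |E_2|$. For $\dim X=3$, Corollary~\ref{2Fold}(iii) gives $|V_2|\le 1$, hence $|V_4|\le 1$; combined with $|V_4|\ge 1$ from~(i) this yields $|V_4|=1$ and then $|E_4|=0$. For $\dim X=4$, Corollary~\ref{2Fold}(iv) gives $|E_2|=0$, hence $|E_4|=0$, and then~(i) forces $|V_4|=1$. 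This is how the isostatic hypothesis enters beyond equation~(i): through the already-established counts for the half-turn $(C_4)^2$.
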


\proof
The trace of $\tau(C_4)$ is $\dim(X)-2$ and so $(i)$ follows from Proposition \ref{FiniteIsometryGroup}. If $\dim(X)=2$ then by $(i)$, $|E_4|= 0$.
Note that $(C_4)^2$ is a half-turn rotation and so,
by Corollary \ref{2Fold}$(ii)$, at most one vertex of $G$ is fixed by $(C_4)^2$.
A vertex which is fixed by $C_4$ must also be fixed by $(C_4)^2$ and so  $|V_4|\leq 1$.
This proves $(ii)$.
If $\dim (X)=3$ then by $(i)$, $|V_4|\geq 1$. Suppose $v$ and $w$ are vertices of $G$ which are fixed by $C_4$. Then $v$ and $w$ are also fixed by $(C_4)^2$.
By Corollary \ref{2Fold}$(iii)$, there exists at most one vertex of $G$ which is fixed by $(C_4)^2$.
Thus $v=w$ and so $|V_4|=1$. Also, $|E_4|=0$ by $(i)$. 
If $\dim (X)=4$ and  $vw$ is an edge of $G$ which is fixed by $C_4$ then $vw$ is also fixed by $(C_4)^2$.
However, this contradicts Corollary \ref{2Fold}$(iv)$, and so no edge of $G$ is fixed by $C_4$.
Thus $|E_4|=0$ and, by $(i)$, $|V_4|=1$.  This establishes $(iii)$.
Part $(iv)$ follows directly from $(i)$.
\endproof

\begin{remark}
Note that if a framework $(G,p)$ is $\Gamma$-symmetric and a vertex $v$ is fixed by a non-trivial symmetry operation $\gamma$ in $\Gamma$,  then $p(v)$ must occupy a special geometric position in the space $X$. Specifically, $p(v)$ must be lie in the kernel of $I-\tau(\gamma)$. For example, in dimension $2$, if a vertex $v$ is fixed by a half-turn $C_2$ then $p(v)$ must lie on the origin, and if $v$ is fixed by a reflection $s$ then $p(v)$ must lie on the corresponding mirror line. Similarly, if an edge $vw$ is fixed by a  symmetry operation $\gamma$ then $p(v)-p(w)$ must lie in the kernel of either $I-\tau(\gamma)$ or $I+\tau(\gamma)$. 
For example, if $vw$ is fixed by a half-turn $C_2$, then the corresponding bar must be centred at the origin, and if $vw$ is fixed by a reflection, then this bar must either lie along the mirror line, or, in the complementary line along which the reflection acts and centred at the mirror line.

Analogously, in the $3$-dimensional case if a vertex $v$ is fixed by an $n$-fold rotation $C_n$ then $p(v)$ must lie on the rotation axis, and if $v$ is fixed by a reflection $s$ then $p(v)$ must lie on the corresponding mirror plane.  If an edge $vw$ is fixed by a
half-turn, then the corresponding bar must lie either along the rotation axis, or, in the plane of rotation and centred at the axis. Similarly, if $vw$ is fixed by a reflection then this bar must lie either within the mirror plane, or, in the complementary line along which the reflection acts and centred at the mirror plane.
If the edge $vw$ is fixed by an $n$-fold  rotation $C_{n>2}$ then the corresponding bar must lie along the rotation axis. 

In any dimension, if a vertex $v$ is fixed by an inversion $i$ then $p(v)$ must lie on the origin and  if an edge $vw$ is fixed by an inversion then the corresponding bar must be centred at the origin.
\end{remark}

The following example considers the class of $\ell^p$ norms with $p\not=2$ and demonstrates  several $2$-dimensional isostatic frameworks for all possible symmetry groups. 

\begin{example} \label{ex:2dexamp}
Consider a bar-joint framework $(G,p)$ in $\bR^2$ which is $\Gamma$-symmetric with respect to an $\ell^p$ norm, with $1\leq p\leq \infty$ and $p\not=2$.
The group of linear isometries for these norms has order $8$ and is generated by the reflections in the lines $x=0$ and $x=y$.
Thus, each non-trivial symmetry operation for $(G,p)$ must be either a reflection $s$,
a half-turn rotation $C_2$ or a four-fold rotation $C_4$.
For this reason the only symmetry groups we need consider are the reflectional group $\mathcal{C}_s$, the rotational groups $\mathcal{C}_2$ and $\mathcal{C}_4$, and the dihedral groups $\mathcal{C}_{2v}$ and $\mathcal{C}_{4v}$.

In Figure \ref{fig:henstart}, several examples of isostatic bar-joint frameworks in $(\mathbb{R}^2,\|\cdot\|_\infty)$ are illustrated for these symmetry groups. 
The framework $(a)$ illustrates reflectional symmetry in the $y$-axis with one fixed vertex and no fixed edges while framework $(b)$ illustrates reflectional symmetry in the line $y=x$ with two fixed vertices and no fixed edges (cf. Corollary \ref{Reflection}). The framework $(c)$ illustrates symmetry under a half-turn rotation about the origin with no fixed vertices and two fixed edges (cf. Corollary \ref{2Fold}).
Framework $(d)$ illustrates symmetry under four-fold rotation about the origin with one fixed vertex and no fixed edges (cf. Corollary \ref{4Fold}).
The framework $(e)$ illustrates symmetry under the dihedral group $\C_{2v}$ generated by a reflection in the $x$-axis and a half-turn rotation about the origin. In this case, exactly one vertex and no edges are fixed by each non-trivial symmetry operation in $\C_{2v}$.
The framework $(f)$ illustrates symmetry under the dihedral group $\C_{4v}$ generated by a reflection in the $x$-axis and a four-fold rotation about the origin. Again, exactly one vertex and no edges are fixed by each non-trivial symmetry operation in $\C_{4v}$.

Note that each of the frameworks in Figure~\ref{fig:henstart} can easily be checked to be isostatic using the results in \cite{kit-pow}. As shown in \cite{kit-pow}, a well-positioned framework $(G, p)$ in $(\mathbb{R}^2,\|\cdot\|_\infty)$ induces a natural labelling of the edges of $G$ (with two colours) determined by the placement $p$ relative to the facets of the unit ball of the norm $\|\cdot\|_\infty$, and $(G,p)$ is isostatic in $(\mathbb{R}^2,\|\cdot\|_\infty)$ if and only if the two induced monochrome subgraphs are spanning trees. The induced spanning trees of the underlying graphs of the frameworks in Figure~\ref{fig:henstart} are indicated in black and gray, respectively.
\end{example}

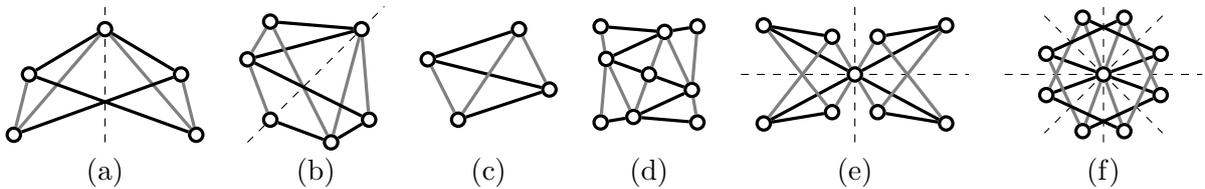
\begin{figure}[htp]
\begin{center}
\begin{tikzpicture}[very thick,scale=1]
\tikzstyle{every node}=[circle, draw=black, fill=white, inner sep=0pt, minimum width=5pt];

\draw[dashed,thin] (0,-0.9)  --  (0,0.9);

\path (0,0.6) node (p1)  {} ;
\path (-1,0) node (p2)  {} ;
\path (1,0) node (p3)  {} ;
\path (-1.2,-0.8) node (p4)  {} ;
\path (1.2,-0.8) node (p6)  {} ;

\draw (p1)  --  (p2);
\draw (p1)  --  (p3);
\draw[gray] (p1)  --  (p4);
\draw[gray] (p1)  --  (p6);
\draw[gray] (p2)  --  (p4);
\draw[gray] (p3)  --  (p6);
\draw (p4)  --  (p3);
\draw (p6)  --  (p2);
\node [rectangle, draw=white, fill=white] (b) at (0,-1.3) {(a)};
\end{tikzpicture}
\hspace{0.2cm}
 \begin{tikzpicture}[very thick,scale=1]
\tikzstyle{every node}=[circle, draw=black, fill=white, inner sep=0pt, minimum width=5pt];

\draw[dashed,thin] (-0.9,-0.9)  --  (0.9,0.9);

\path (0.6,0.6) node (p1)  {} ;
\path (-0.6,0.7) node (p2)  {} ;
\path (-0.9,0.2) node (p3)  {} ;
\path (0.7,-0.6) node (p4)  {} ;
\path (0.2,-0.9) node (p5)  {} ;
\path (-0.6,-0.6) node (p6)  {} ;

\draw (p1)  --  (p2);
\draw (p1)  --  (p3);
\draw [gray](p1)  --  (p4);
\draw[gray] (p1)  --  (p5);
\draw[gray] (p2)  --  (p3);
\draw (p4)  --  (p5);
\draw[gray] (p2)  --  (p5);
\draw (p4)  --  (p3);
\draw[gray] (p6)  --  (p3);
\draw (p6)  --  (p5);
\node [rectangle, draw=white, fill=white] (b) at (0,-1.3) {(b)};
\end{tikzpicture}
\hspace{0.2cm}
\begin{tikzpicture}[very thick,scale=1]
\tikzstyle{every node}=[circle, draw=black, fill=white, inner sep=0pt, minimum width=5pt];

\path (-0.8,0.2) node (p1)  {} ;
\path (0.8,-0.2) node (p2)  {} ;
\path (-0.4,-0.6) node (p3)  {} ;
\path (0.4,0.6) node (p4)  {} ;

\draw (p1)  --  (p2);
\draw[gray] (p1)  --  (p3);
\draw (p1)  --  (p4);
\draw[gray] (p2)  --  (p4);
\draw[gray] (p3)  --  (p4);
\draw (p3)  --  (p2);
\node [rectangle, draw=white, fill=white] (b) at (0,-1.3) {(c)};
\end{tikzpicture}
 \hspace{0.2cm}
\begin{tikzpicture}[very thick,scale=1]
\tikzstyle{every node}=[circle, draw=black, fill=white, inner sep=0pt, minimum width=5pt];

\path (70:0.6cm) node (p1)  {} ;
\path (160:0.6cm) node (p2)  {} ;
\path (250:0.6cm) node (p3)  {} ;
\path (340:0.6cm) node (p4)  {} ;
\path (70:0cm) node (c)  {} ;

\path (45:0.9cm) node (o1)  {} ;
\path (135:0.9cm) node (o2)  {} ;
\path (225:0.9cm) node (o3)  {} ;
\path (315:0.9cm) node (o4)  {} ;

\draw (p1)  --  (p2);
\draw[gray] (p2)  --  (p3);
\draw (p3)  --  (p4);
\draw[gray] (p1)  --  (p4);
\draw[gray] (c)  --  (p1);
\draw (c)  --  (p2);
\draw[gray] (c)  --  (p3);
\draw (c)  --  (p4);

\draw (o1)  --  (p1);
\draw[gray] (o1)  --  (p4);

\draw (o2)  --  (p1);
\draw[gray] (o2)  --  (p2);

\draw[gray]  (o3)  --  (p2);
\draw(o3)  --  (p3);

\draw (o4)  --  (p3);
\draw[gray] (o4)  --  (p4);

\node [rectangle, draw=white, fill=white] (b) at (0,-1.3) {(d)};
\end{tikzpicture}
\hspace{0.2cm}
\begin{tikzpicture}[very thick,scale=1]
\tikzstyle{every node}=[circle, draw=black, fill=white, inner sep=0pt, minimum width=5pt];
\draw[dashed,thin] (0,-0.9)  --  (0,0.9);
\draw[dashed,thin] (-1.5,0)  --  (1.5,0);

\path (0,0) node (p1)  {} ;
\path (-0.3,0.5) node (p2)  {} ;
\path (0.3,0.5) node (p22)  {} ;
\path (-0.3,-0.5) node (p3)  {} ;
\path (0.3,-0.5) node (p33)  {} ;

\path (-1.2,0.65) node (p4)  {} ;
\path (1.2,0.65) node (p44)  {} ;
\path (-1.2,-0.65) node (p5)  {} ;
\path (1.2,-0.65) node (p55)  {} ;

\draw[gray] (p1)  --  (p2);
\draw[gray] (p1)  --  (p22);
\draw[gray] (p1)  --  (p3);
\draw[gray] (p1)  --  (p33);
\draw (p1)  --  (p4);
\draw (p1)  --  (p44);
\draw (p1)  --  (p5);
\draw (p1)  --  (p55);

\draw[gray] (p2)  --  (p5);
\draw[gray] (p3)  --  (p4);

\draw (p3)  --  (p5);
\draw (p2)  --  (p4);

\draw[gray] (p22)  --  (p55);
\draw[gray] (p33)  --  (p44);

\draw (p33)  --  (p55);
\draw (p22)  --  (p44);

\node [rectangle, draw=white, fill=white] (b) at (0,-1.3) {(e)};
\end{tikzpicture}
\hspace{0.2cm}
\begin{tikzpicture}[very thick,scale=1]
\tikzstyle{every node}=[circle, draw=black, fill=white, inner sep=0pt, minimum width=5pt];
\draw[dashed,thin] (0,-0.9)  --  (0,0.9);
\draw[dashed,thin] (-1.3,0)  --  (1.3,0);
\draw[dashed,thin] (45:1.1cm)  --  (225:1.1cm);
\draw[dashed,thin] (135:1.1cm)  --  (315:1.1cm);

\path (0,0) node (p1)  {} ;
\path (70:0.8cm) node (p2)  {} ;
\path (110:0.8cm) node (p22)  {} ;
\path (160:0.8cm) node (p3)  {} ;
\path (200:0.8cm) node (p33)  {} ;

\path (250:0.8cm) node (p4)  {} ;
\path (290:0.8cm) node (p44)  {} ;
\path (340:0.8cm) node (p5)  {} ;
\path (20:0.8cm) node (p55)  {} ;

\draw[gray] (p1)  --  (p2);
\draw[gray] (p1)  --  (p22);
\draw (p1)  --  (p3);
\draw (p1)  --  (p33);
\draw[gray] (p1)  --  (p4);
\draw[gray](p1)  --  (p44);
\draw (p1)  --  (p5);
\draw (p1)  --  (p55);

\draw[gray] (p22)  --  (p33);
\draw[gray] (p3)  --  (p4);

\draw (p33)  --  (p44);
\draw (p5)  --  (p4);

\draw[gray] (p44)  --  (p55);
\draw[gray] (p5)  --  (p2);

\draw (p55)  --  (p22);
\draw (p2)  --  (p3);

\node [rectangle, draw=white, fill=white] (b) at (0,-1.3) {(f)};
\end{tikzpicture}
     \end{center}
     \caption{Examples of symmetric isostatic  bar-joint frameworks in $(\bR^2,\|\cdot\|_\infty)$: (a), (b) $\mathcal{C}_s$-symmetry with $|V_s|=1$ and $|V_s|=2$, respectively; (c) $\mathcal{C}_2$-symmetry (with $|V_2|=0$ and $|E_2|=2$); (d) $\mathcal{C}_4$-symmetry (with $|V_4|=1$); (e) $\mathcal{C}_{2v}$-symmetry;
     (f) $\mathcal{C}_{4v}$-symmetry.}
\label{fig:henstart}
\end{figure}

\subsection{$n$-fold Rotations, $n=3$ and $n\geq5$.}
The following corollary considers the case of a three-fold rotation.

\begin{corollary}
\label{3Fold}
Let $(G,p)$ be a well-positioned bar-joint framework in $(X,\|\cdot\|)$
which is isostatic and $\Gamma$-symmetric with respect to $\theta$ and $\tau$.
Suppose the group of linear isometries of $(X,\|\cdot\|)$ is finite.
If the symmetry group $\Gamma$ contains a  three-fold rotation   $C_3\in \Gamma$ 
 then,
\begin{enumerate}[(i)]
\item $|E_3| =  \left(\dim (X)-3\right)\,(|V_3|-1)$.
\item If $\dim (X)=2$, then one of the following two conditions holds,
\begin{enumerate}[(a)]
\item  $|V_3| =0$, and, $|E_3|=1$.
\item $|V_3| =1$, and, $|E_3|=0$.
\end{enumerate}
\item If  $\dim (X)=3$, then  $|E_3|=0$.
\item If  $\dim (X)\geq4$, then the following two conditions hold,
\begin{enumerate}[(a)]
\item  $|V_3| \geq1$, and,
\item $|V_3| =1$ if and only if $|E_3|=0$.
\end{enumerate}
\end{enumerate}
\end{corollary}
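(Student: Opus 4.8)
The plan is to reduce the whole statement to a single trace computation and then invoke Proposition~\ref{FiniteIsometryGroup}. First I would unpack the definition of a three-fold rotation: $\tau(C_3)=S\oplus I_Z$, where $S$ acts on a two-dimensional subspace $Y$ of $X$ as rotation through $2\pi/3$ (so $\tr S = 2\cos(2\pi/3) = -1$) and $I_Z$ is the identity on a complementary subspace $Z$ of dimension $\dim(X)-2$. Hence $\tr(\tau(C_3)) = -1 + (\dim(X)-2) = \dim(X)-3$. Applying Proposition~\ref{FiniteIsometryGroup}(ii) with $\gamma = C_3$ then yields part $(i)$ directly, namely $|E_3| = (\dim(X)-3)(|V_3|-1)$.

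Parts $(ii)$--$(iv)$ are elementary consequences of $(i)$ together with the fact that $|E_3|$ and $|V_3|$ are non-negative integers. For $(ii)$, when $\dim(X)=2$ the formula becomes $|E_3| = 1 - |V_3|$; non-negativity of $|E_3|$ forces $|V_3|\leq 1$, and the two possibilities $|V_3|=0$ and $|V_3|=1$ give precisely the two stated alternatives. For $(iii)$, when $\dim(X)=3$ the coefficient $\dim(X)-3$ is zero, so $|E_3|=0$ whatever the value of $|V_3|$. For $(iv)$, when $\dim(X)\geq 4$ the coefficient $\dim(X)-3$ is a positive integer, so $|E_3|\geq 0$ forces $|V_3|-1\geq 0$, i.e.\ $|V_3|\geq 1$; and since the coefficient is nonzero, $|E_3| = 0$ if and only if $|V_3|-1 = 0$, i.e.\ $|V_3| = 1$.

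I do not expect any real obstacle: unlike the $C_4$ case (Corollary~\ref{4Fold}), which additionally uses that $(C_4)^2$ is a half-turn and appeals to Corollary~\ref{2Fold}, here the eigenvalue bookkeeping for the single rotation block suffices. The only thing worth a remark is consistency in the low-dimensional cases handled separately: when $\dim(X)=2$ the complement $Z$ is trivial and $\tau(C_3)=S$ has trace $-1 = \dim(X)-3$, while when $\dim(X)=3$ the complement $Z$ is a one-dimensional fixed axis and $\tr(\tau(C_3)) = -1 + 1 = 0 = \dim(X)-3$, so the formula in $(i)$ is being applied correctly throughout.
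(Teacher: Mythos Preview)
Your proposal is correct and follows exactly the approach of the paper: compute $\tr(\tau(C_3))=\dim(X)-3$ and apply Proposition~\ref{FiniteIsometryGroup}, then read off the remaining parts from the resulting identity. The paper's proof is in fact just the one-line version of what you have written out in detail.
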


\begin{proof}
The trace of $\tau(C_3)$ is $\dim(X)-3$ and so $(i)$ follows from Proposition \ref{FiniteIsometryGroup}. The remaining parts are simple consequences of $(i)$.

\end{proof}

The case of a six-fold rotation is presented in the following corollary.

\begin{corollary}
Let $(G,p)$ be a well-positioned bar-joint framework in $(X,\|\cdot\|)$
which is isostatic and $\Gamma$-symmetric with respect to $\theta$ and $\tau$.
Suppose the group of linear isometries of $(X,\|\cdot\|)$ is finite.
If the symmetry group $\Gamma$ contains a  six-fold rotation   $C_6\in \Gamma$ 
 then,
\begin{enumerate}[(i)]
\item $|E_6| =  \left(\dim (X)-1\right)\,(|V_6|-1)$.
\item If $\dim (X)=2$ or $\dim(X)=3$, then $|V_6|=1$ and $|E_6|=0$.
\item  If $\dim (X)\geq4$ then the following two conditions hold,
\begin{enumerate}[(a)]
\item  $|V_6| \geq1$, and,
\item $|V_6| =1$ if and only if $|E_6|=0$.
\end{enumerate}
\end{enumerate}
\end{corollary}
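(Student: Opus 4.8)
The plan is to mirror the proofs of Corollaries~\ref{2Fold}, \ref{3Fold} and \ref{4Fold}: first read off part~(i) directly from Proposition~\ref{FiniteIsometryGroup} after computing $\tr(\tau(C_6))$, and then deduce the dimension-specific statements by passing to the powers $(C_6)^2$ and $(C_6)^3$, which are themselves rotations of smaller order, so that the earlier corollaries apply to them.

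For part~(i), I would use that, by definition of a six-fold rotation, $\tau(C_6)=S\oplus I_Z$ where $S$ is the planar rotation through $2\pi/6$ on a two-dimensional subspace $Y$ and $Z$ is a complement. Hence
\[
\tr(\tau(C_6)) = 2\cos\tfrac{2\pi}{6} + \dim(Z) = 1 + (\dim(X)-2) = \dim(X)-1,
\]
and Proposition~\ref{FiniteIsometryGroup}(ii) immediately gives $|E_6| = (\dim(X)-1)(|V_6|-1)$.

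For part~(ii), the key observation is that $\tau((C_6)^3)=S^3\oplus I_Z=-I_Y\oplus I_Z$, so $(C_6)^3$ is a half-turn rotation, while $\tau((C_6)^2)=S^2\oplus I_Z$ is rotation through $2\pi/3$ on $Y$, so $(C_6)^2$ is a three-fold rotation. Since a vertex or edge fixed by $C_6$ is automatically fixed by every power of $C_6$, we have $V_6\subseteq V_{(C_6)^3}$ and $E_6\subseteq E_{(C_6)^2}$. When $\dim(X)=2$, Corollary~\ref{2Fold}(ii) gives $|V_{(C_6)^3}|\leq 1$, hence $|V_6|\leq 1$; and part~(i) reads $|E_6|=|V_6|-1\geq 0$, so $|V_6|\geq 1$; therefore $|V_6|=1$ and $|E_6|=0$. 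When $\dim(X)=3$, Corollary~\ref{3Fold}(iii) gives $|E_{(C_6)^2}|=0$, hence $|E_6|=0$, and then part~(i), which now reads $0=2(|V_6|-1)$, forces $|V_6|=1$; alternatively Corollary~\ref{2Fold}(iii) could be used in place of Corollary~\ref{3Fold}(iii) here.

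Part~(iii) is then pure arithmetic: when $\dim(X)\geq 4$ the coefficient $\dim(X)-1$ in part~(i) is strictly positive, so $|E_6|\geq 0$ forces $|V_6|\geq 1$, and the same identity makes $|V_6|=1$ equivalent to $|E_6|=0$. I do not anticipate a genuine obstacle; the only point requiring a little care is verifying that $(C_6)^2$ and $(C_6)^3$ really do satisfy the defining conditions of a three-fold and a half-turn rotation respectively, which is immediate from the block form $\tau(C_6)=S\oplus I_Z$, together with checking the (trivial) inclusions of fixed-vertex and fixed-edge sets under passing to powers.
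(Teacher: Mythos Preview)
Your proof is correct and follows essentially the same strategy as the paper: compute $\tr(\tau(C_6))=\dim(X)-1$ and apply Proposition~\ref{FiniteIsometryGroup} for part~(i), then pass to a suitable power of $C_6$ and invoke the earlier corollaries for part~(ii). The only minor difference is that the paper uses $(C_6)^2$ and Corollary~\ref{3Fold} uniformly for both $\dim(X)=2$ and $\dim(X)=3$ (first deducing $|V_6|\geq 1$ from~(i), hence $|V_{(C_6)^2}|\geq 1$, which via Corollary~\ref{3Fold}(ii),(iii) forces $|E_{(C_6)^2}|=0$), whereas you split the two dimensions and use the half-turn $(C_6)^3$ with Corollary~\ref{2Fold} in dimension~$2$; both routes are equally valid and the difference is cosmetic.
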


\proof
The trace of $\tau(C_6)$ is $\dim(X)-1$ and so $(i)$ follows from Proposition \ref{FiniteIsometryGroup}.
If $\dim (X)=2$ or $\dim (X)=3$ then by $(i)$, $|V_6|\geq 1$. In particular, there exists at least one vertex of $G$ which is fixed by the $3$-fold rotation $(C_6)^2$.
By Corollary \ref{3Fold} parts $(ii)$ and $(iii)$, there are no edges of $G$ which are fixed by $(C_6)^2$. Hence $|E_6|=0$ and, by $(i)$, $|V_6|=1$.
This establishes $(ii)$. Part $(iii)$ follows directly from $(i)$.
\endproof

For all remaining values of $n$, the derived counting conditions for an $n$-fold rotation are presented below.

\begin{corollary}
Let $(G,p)$ be a well-positioned bar-joint framework in $(X,\|\cdot\|)$
which is isostatic and $\Gamma$-symmetric with respect to $\theta$ and $\tau$.
Suppose the group of linear isometries of $(X,\|\cdot\|)$ is finite.
If the symmetry group $\Gamma$ contains an $n$-fold rotation $C_n\in \Gamma$ 
with $n=5$, or, $n\geq 7$ then  $|V_n| =1$, and, $|E_n|=0$.
\end{corollary}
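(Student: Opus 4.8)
The plan is to combine the exact count from Proposition~\ref{FiniteIsometryGroup}$(ii)$ with an irrationality argument for the trace of $\tau(C_n)$. First I would compute that trace. By the definition of an $n$-fold rotation, $\tau(C_n)=S\oplus I_Z$, where $S$ acts on a two-dimensional subspace $Y$ as a rotation through $2\pi/n$ and $I_Z$ is the identity on a complement $Z$ with $\dim(Z)=\dim(X)-2$. Hence
\[ \tr(\tau(C_n)) = 2\cos\tfrac{2\pi}{n} + \big(\dim(X)-2\big). \]

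The heart of the argument is that $2\cos\tfrac{2\pi}{n}$ is irrational whenever $n=5$ or $n\geq 7$. Since $\zeta:=e^{2\pi i/n}$ is an algebraic integer, so is $\zeta+\zeta^{-1}=2\cos\tfrac{2\pi}{n}$; a rational algebraic integer is a rational integer. For $n\geq 3$ we have $2\pi/n\in(0,\pi)$, so $2\cos\tfrac{2\pi}{n}$ lies in the open interval $(-2,2)$, and the only integers there are $-1,0,1$, attained precisely at $n=3,4,6$ respectively (as $\cos\tfrac{2\pi}{n}=-\tfrac12,0,\tfrac12$). Therefore, for $n=5$ or $n\geq 7$, the number $2\cos\tfrac{2\pi}{n}$ is not a rational integer, hence irrational, and consequently $\tr(\tau(C_n))=2\cos\tfrac{2\pi}{n}+(\dim(X)-2)$ is irrational as well. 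Note that, unlike the cases $n=2,3,4,6$ treated earlier, there is no reduction to a lower $n$ here, since every power $(C_n)^k$ with $\gcd(k,n)=1$ is again an $n$-fold rotation; so this irrationality input is genuinely needed and is the one step requiring care (though it is classical, being a special case of Niven's theorem).

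Finally, I would invoke Proposition~\ref{FiniteIsometryGroup}$(ii)$, which yields $|E_n| = \tr(\tau(C_n))\,(|V_n|-1)$. The left-hand side is a non-negative integer and $\tr(\tau(C_n))$ is irrational, so the factor $|V_n|-1$ must vanish; that is, $|V_n|=1$, and then $|E_n|=0$. (Equivalently: if $|V_n|=0$ then $|E_n|=-\tr(\tau(C_n))$ is irrational, and if $|V_n|\geq 2$ then $|E_n|$ is a nonzero integer multiple of an irrational number, both impossible.) This completes the proof.
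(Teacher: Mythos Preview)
Your proof is correct and follows the same route as the paper: compute $\tr(\tau(C_n))=2\cos\tfrac{2\pi}{n}+(\dim(X)-2)$, invoke Proposition~\ref{FiniteIsometryGroup}$(ii)$ to get $|E_n|=\tr(\tau(C_n))\,(|V_n|-1)$, and force $|V_n|-1=0$ from the irrationality of the coefficient. The paper's own proof is terser---it simply asserts $2\cos\tfrac{2\pi}{n}\notin\bZ$ and says ``the result follows''---whereas you supply the algebraic-integer justification (a special case of Niven's theorem) that is really what makes the divisibility obstruction work; this extra detail is appropriate, since non-integrality alone would not immediately rule out a rational value for $|E_n|/(|V_n|-1)$.
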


\proof
Note that the trace of $\tau(C_n)$ is $\dim(X)-2\cos\left(\frac{2\pi}{n}\right)$ and so by Proposition \ref{FiniteIsometryGroup}, $|E_n| =  \left(\dim (X)-2\cos\left(\frac{2\pi}{n}\right)\right)\,(|V_n|-1)$.
If $n=5$ or $n\geq 7$ then $2\cos\left(\frac{2\pi}{n}\right)\notin \bZ$ and so the result follows.
\endproof

\subsection{Improper rotations.}
Suppose $\dim(X)=3$.
An element $S_n\in\Gamma$ is called an {\em improper rotation}  by an angle of $\frac{2\pi}{n}$
if it is a composition of an $n$-fold rotation $C_n$, where $X=Y\oplus Z$ and $\tau(C_n)=S\oplus I_Z$, followed by a reflection $s$, where  $\tau(s)=I-2P$, such that $P$ is the rank one projection of $X$ along $Y$ onto $Z$. If $n=2$ then $S_2$ is an inversion and so we assume here that $n\geq3$.

\begin{corollary}
Let $(G,p)$ be a well-positioned bar-joint framework in $(X,\|\cdot\|)$
which is isostatic and $\Gamma$-symmetric with respect to $\theta$ and $\tau$.
Suppose $\dim (X)= 3$ and the group of linear isometries of $(X,\|\cdot\|)$ is finite.
If the symmetry group $\Gamma$ contains an improper rotation $S_n\in \Gamma$  then,
\begin{enumerate}[(i)]
\item $|E_{S_n}| =  \left(2\cos\left(\frac{2\pi}{n}\right) -1\right)\,(|V_{S_n}|-1)$.
\item  If  $n=3$, then $|V_{S_3}| =1$, and, $|E_{S_3}|=0$. 
\item  If $n=4$, then one of the following two conditions holds,
\begin{enumerate}[(a)]
\item  $|V_{S_4}| =0$, and, $|E_{S_4}|=1$.
\item $|V_{S_4}| =1$, and, $|E_{S_4}|=0$.
\end{enumerate}
\item  If $n=6$, then  $|E_{S_6}|=0$.
\item If $n=5$ or $n\geq 7$, then $|V_{S_n}|=1$ and $|E_{S_n}|=0$.
\end{enumerate}
\end{corollary}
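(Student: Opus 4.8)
The plan is to compute $\tr(\tau(S_n))$ directly from the definition, extract statement (i) from Proposition~\ref{FiniteIsometryGroup}(ii), and then obtain (ii)--(v) by combining (i) with the fact that $|V_{S_n}|$ and $|E_{S_n}|$ are non-negative integers, supplemented in one case by the counting conditions already established for a rotation arising as a power of $S_n$. First I would unwind the definition of $S_n$: writing $X = Y \oplus Z$ with $\dim Y = 2$ and $\dim Z = 1$, we have $\tau(C_n) = S \oplus I_Z$ and $\tau(s) = I - 2P$ with $P$ the rank-one projection onto $Z$ along $Y$, so $\tau(s) = I_Y \oplus (-I_Z)$ and therefore $\tau(S_n) = \tau(s)\tau(C_n) = S \oplus (-I_Z)$. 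Taking traces gives $\tr(\tau(S_n)) = 2\cos(2\pi/n) - 1$, and Proposition~\ref{FiniteIsometryGroup}(ii) applied to $\gamma = S_n$ yields (i) at once.

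Parts (iii)--(v) then follow from (i) alone together with the integrality and non-negativity of the counts. For $n = 6$ the coefficient $2\cos(2\pi/6) - 1$ vanishes, forcing $|E_{S_6}| = 0$, which is (iv). For $n = 5$ and $n \geq 7$ one has $2\cos(2\pi/n) \notin \bZ$, so $2\cos(2\pi/n) - 1$ is irrational; since the right-hand side of (i) must then be an integer, $|V_{S_n}| - 1 = 0$, giving $|V_{S_n}| = 1$ and $|E_{S_n}| = 0$, which is (v). For $n = 4$ the coefficient is $2\cos(\pi/2) - 1 = -1$, so $|E_{S_4}| = 1 - |V_{S_4}|$; non-negativity forces $|V_{S_4}| \in \{0,1\}$, which is exactly the dichotomy in (iii). (Equivalently, $\tau(S_4)^2 = (-I_Y) \oplus I_Z$ is a half-turn, so $|V_{S_4}| \le 1$ by Corollary~\ref{2Fold}(iii).)

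Finally, for (ii) the coefficient is $2\cos(2\pi/3) - 1 = -2$, so (i) reads $|E_{S_3}| = 2 - 2|V_{S_3}|$, and non-negativity leaves only $(|V_{S_3}|, |E_{S_3}|) \in \{(0,2),(1,0)\}$; the task is to exclude the first possibility. Here I would pass to $(S_3)^2 \in \Gamma$: since $\tau(S_3)^2 = S^2 \oplus I_Z$ and $S^2$ is rotation of $Y$ through $4\pi/3$, which after reorienting a basis vector of $Y$ is a rotation through $2\pi/3$, the element $(S_3)^2$ is a three-fold rotation in the sense of the paper, and every vertex and edge of $G$ fixed by $S_3$ is fixed by $(S_3)^2$. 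By Corollary~\ref{3Fold}(iii) no edge is fixed by a three-fold rotation when $\dim X = 3$, so $|E_{S_3}| = 0$, and then (i) forces $|V_{S_3}| = 1$. The only delicate point in the whole argument is this last kind of bookkeeping --- verifying that the relevant power of $S_n$ is genuinely an object covered by Corollary~\ref{2Fold} or~\ref{3Fold} (a matter of presenting a rotation through $4\pi/3$ as one through $2\pi/3$ by reorienting the plane of rotation, and of checking that the fixed vertex and edge sets are nested under passing to powers); everything else is an immediate consequence of (i).
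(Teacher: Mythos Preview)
Your proof is correct and follows essentially the same route as the paper: compute $\tr(\tau(S_n))=2\cos(2\pi/n)-1$ via the block form $S\oplus(-I_Z)$, read off (i) from Proposition~\ref{FiniteIsometryGroup}, deduce (iii)--(v) by integrality and non-negativity, and handle (ii) by passing to the three-fold rotation $S_3^2$ and invoking Corollary~\ref{3Fold}. Your extra bookkeeping (checking that $S^2$ really presents as a $2\pi/3$ rotation after reorienting a basis vector, and that fixed sets nest under powers) is more explicit than the paper's, but the argument is the same.
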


\proof
Note that $\tau(S_n) = (I-2P)(S\oplus I_Z) = S\oplus (-I_Z)$ and so
$\tau(S_n)$ has trace $2\cos\left(\frac{2\pi}{n}\right) -(\dim (X)-2)$.
Part $(i)$ now follows from Proposition \ref{FiniteIsometryGroup}. To see $(ii)$, note that $(i)$ implies that either $|V_{S_3}| =0$ and $|E_{S_3}|=2$ or $|V_{S_3}| =1$ and $|E_{S_3}|=0$. However, $S_3^2$ is a three-fold rotation about the improper rotation axis of $S_3$, and by Corollary~\ref{3Fold}, we must have $|E_{3}|=0$. It follows that we must have $|V_{S_3}| =1$ and $|E_{S_3}|=0$. The remaining parts follow from $(i)$.
\endproof

\begin{remark}
If a framework $(G,p)$ has a symmetry group $\Gamma$ which contains an improper rotation $S_n = s\circ C_n$ and $n$ is odd then $\Gamma$ must also contain the reflection $s$ and the $n$-fold rotation $C_n$.
If $n$ is even then $\Gamma$ must contain the $\frac{n}{2}$-fold rotation $(C_n)^2$.
If a vertex $v$  is fixed by an improper rotation $S_{n>2}$ then $p(v)$ must lie on the origin and if an edge $vw$ is fixed by an improper rotation then the corresponding bar must lie along the improper rotation axis and centred at the mirror.
\end{remark}

The following example demonstrates a $3$-dimensional isostatic framework with symmetry group 
 $\mathcal{C}_{3h}$  generated by a reflection $s$ and a $3$-fold rotation $C_3$. The norm is a polyhedral norm with unit ball a hexagonal prism.

\begin{example}\label{ex:3dexamp}
Define a graph $G$ as follows. First, take four 3-cycles with respective vertices $\{v_1,v_2,v_3\}$, $\{v_1',v_2',v_3'\}$, $\{w_1,w_2,w_3\}$ and $\{w_1',w_2',w_3'\}$. Then adjoin the following additional edges: $v_jv_j'$ and $w_jw_j'$ for each $j$, $v_1w_3'$, $w_1v_3'$, $v_2w_1'$, $w_2v_1'$, $v_3w_2'$ and $w_3v_2'$. Finally, take the cone over this graph by adjoining a new vertex $o$ and an edge from $o$ to all other vertices. See also Figure~\ref{fig:3dexamp}.

Next, define a bar-joint framework $(G,p)$ in $\mathbb{R}^3$ by placing the vertices of $G$ as follows. Place $p(o)$ at the origin,  $p(v_1)$ at the point $(-1/4, -1, 1/4)$, and $p(v_1')$ at the point $(-1/4, -1, 3/2)$.
The placements for all other vertices are generated by taking the orbits of $p(v_1)$ and $p(v_1')$ under three-fold rotation about the $z$-axis and reflection in the $xy$-plane. In the Schoenflies notation, the  resulting framework then has the symmetry group $\mathcal{C}_{3h}$.

Consider the polyhedral norm on $\mathbb{R}^3$ with unit ball a hexagonal prism determined by the extreme points 
$(\cos(\pi(k-1)/3),\sin(\pi(k-1)/3), 1)$ and $(\cos(\pi(k-1)/3), \sin(\pi(k-1)/3), -1)$, for $k=1,2,...,6$.
It can now be verified (either through calculation of the rigidity matrix or by analysis of the induced monochrome subgraphs of $G$ \cite{kitson}) that $(G,p)$ is isostatic with respect to this polyhedral norm.

Note that $(G,p)$  satisfies the required Maxwell-Laman counts of Theorem~\ref{thm:maxwell}, as well as the additional counts derived in  Section~\ref{sec:FiniteIsometryGroup} for the various symmetry operations in $\mathcal{C}_{3h}$. In particular, one vertex is fixed by the improper rotation $S_3$ and no edges are fixed by $S_3$.

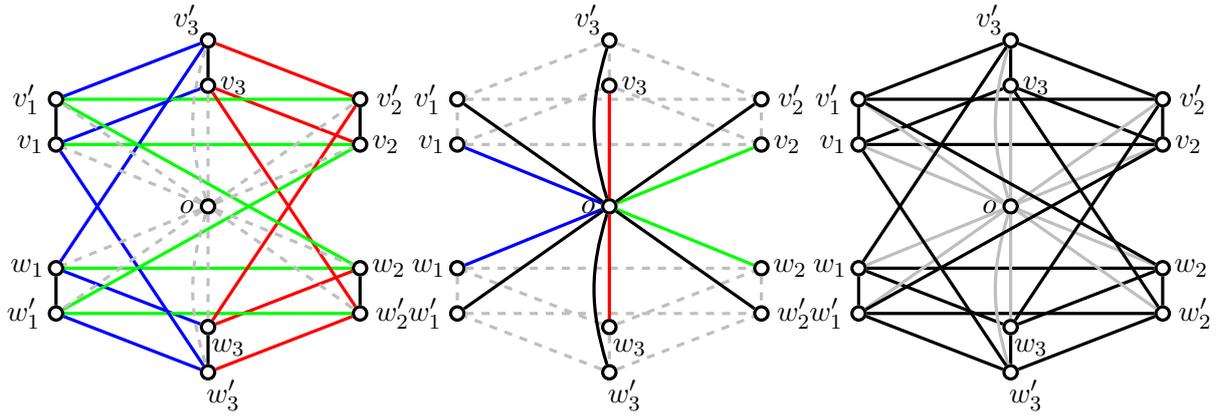
\begin{figure}[htp]
\centering
  \begin{tabular}{  c   }
  
 \begin{minipage}{.32\textwidth}
\begin{tikzpicture}[very thick,scale=1]
\tikzstyle{every node}=[circle, draw=black, fill=white, inner sep=0pt, minimum width=5pt];
\node [rectangle, draw=white, fill=white] (b) at (0.2,-2.5) {$w_3'$};
\node [rectangle, draw=white, fill=white] (b) at (-0.28,0) {$o$};

\path (-2,0.82) node (v1) [label = left: $v_{1}$] {} ;
\path (2,0.82) node (v2)[label = right: $v_{2}$]  {} ;
\path (0,1.6) node (v3) [label =  right: $v_{3}$] {} ;

\path (-2,1.42) node (v11) [label = left: $v_{1}'$] {} ;
\path (2,1.42) node (v22) [label =  right: $v_{2}'$] {} ;
\path (0,2.2) node (v33)[label = above left: $v_{3}'$]  {} ;

\path (-2,-0.82) node (w1) [label = left: $w_{1}$] {} ;
\path (2,-0.82) node (w2) [label =  right: $w_{2}$] {} ;
\path (0,-1.6) node (w3) [label =  below right: $w_{3}$] {} ;

\path (-2,-1.42) node (w11) [label =  left: $w_{1}'$] {} ;
\path (2,-1.42) node (w22) [label =  right: $w_{2}'$] {} ;
\path (0,-2.2) node (w33)  {} ;

\path (0,0) node (o) {} ;

\draw[green] (v1)  --  (v2);
\draw[red] (v3)  --  (v2);
\draw[blue] (v3)  --  (v1);

\draw[green] (v11)  --  (v22);
\draw[red] (v33)  --  (v22);
\draw[blue] (v33)  --  (v11);

\draw (v1)  --  (v11);
\draw (v2)  --  (v22);
\draw (v3)  --  (v33);

\draw[green] (w1)  --  (w2);
\draw[red] (w3)  --  (w2);
\draw[blue] (w3)  --  (w1);

\draw[green] (w11)  --  (w22);
\draw[red] (w33)  --  (w22);
\draw[blue] (w33)  --  (w11);

\draw (w1)  --  (w11);
\draw (w2)  --  (w22);
\draw (w3)  --  (w33);

\draw[blue] (v1)  --  (w33);
\draw[blue] (w1)  --  (v33);
\draw[red] (v3)  --  (w22);
\draw[red] (w3)  --  (v22);
\draw[green] (v2)  --  (w11);
\draw[green] (w2)  --  (v11);

\draw[dashed,lightgray] (o)  --  (v1);
\draw[dashed,lightgray] (o)  --  (v2);
\draw [dashed,lightgray](o)  --  (v3);
\draw[dashed,lightgray](o)  --  (v11);
\draw[dashed,lightgray] (o)  --  (v22);
 \path
(o) edge [dashed,lightgray,bend left=17] (v33);

\draw [dashed,lightgray](o)  --  (w1);
\draw [dashed,lightgray](o)  --  (w2);
 \path
(o) edge [dashed,lightgray,bend right=17] (w33);
\draw[dashed,lightgray] (o)  --  (w3);
\draw[dashed,lightgray] (o)  --  (w11);
\draw[dashed,lightgray] (o)  --  (w22);
\end{tikzpicture}
\end{minipage}

\begin{minipage}{.32\textwidth}
\begin{tikzpicture}[very thick,scale=1]
\tikzstyle{every node}=[circle, draw=black, fill=white, inner sep=0pt, minimum width=5pt];
\node [rectangle, draw=white, fill=white] (b) at (0.2,-2.5) {$w_3'$};
\node [rectangle, draw=white, fill=white] (b) at (-0.28,0) {$o$};
\path (-2,0.82) node (v1) [label = left: $v_{1}$] {} ;
\path (2,0.82) node (v2)[label = right: $v_{2}$]  {} ;
\path (0,1.6) node (v3) [label =  right: $v_{3}$] {} ;

\path (-2,1.42) node (v11) [label = left: $v_{1}'$] {} ;
\path (2,1.42) node (v22) [label =  right: $v_{2}'$] {} ;
\path (0,2.2) node (v33)[label = above left: $v_{3}'$]  {} ;

\path (-2,-0.82) node (w1) [label = left: $w_{1}$] {} ;
\path (2,-0.82) node (w2) [label =  right: $w_{2}$] {} ;
\path (0,-1.6) node (w3) [label =  below right: $w_{3}$] {} ;

\path (-2,-1.42) node (w11) [label =  left: $w_{1}'$] {} ;
\path (2,-1.42) node (w22) [label =  right: $w_{2}'$] {} ;
\path (0,-2.2) node (w33)  {} ;

\path (0,0) node (o) {} ;

\draw[dashed,lightgray] (v1)  --  (v2);
\draw[dashed,lightgray] (v3)  --  (v2);
\draw[dashed,lightgray] (v3)  --  (v1);

\draw[dashed,lightgray] (v11)  --  (v22);
\draw[dashed,lightgray] (v33)  --  (v22);
\draw[dashed,lightgray] (v33)  --  (v11);

\draw[dashed,lightgray]  (v1)  --  (v11);
\draw[dashed,lightgray]  (v2)  --  (v22);
\draw[dashed,lightgray]  (v3)  --  (v33);

\draw[dashed,lightgray] (w1)  --  (w2);
\draw[dashed,lightgray] (w3)  --  (w2);
\draw[dashed,lightgray] (w3)  --  (w1);

\draw[dashed,lightgray] (w11)  --  (w22);
\draw[dashed,lightgray] (w33)  --  (w22);
\draw[dashed,lightgray] (w33)  --  (w11);

\draw[dashed,lightgray]  (w1)  --  (w11);
\draw[dashed,lightgray]  (w2)  --  (w22);
\draw[dashed,lightgray]  (w3)  --  (w33);

\draw[blue] (o)  --  (v1);
\draw[green] (o)  --  (v2);
\draw [red](o)  --  (v3);
\draw(o)  --  (v11);
\draw (o)  --  (v22);
 \path
(o) edge [bend left=17] (v33);

\draw [blue](o)  --  (w1);
\draw [green](o)  --  (w2);
 \path
(o) edge [bend right=17] (w33);
\draw[red] (o)  --  (w3);
\draw (o)  --  (w11);
\draw (o)  --  (w22);
\end{tikzpicture}
\end{minipage}

\begin{minipage}{.32\textwidth}
\begin{tikzpicture}[very thick,scale=1]
\tikzstyle{every node}=[circle, draw=black, fill=white, inner sep=0pt, minimum width=5pt];
\node [rectangle, draw=white, fill=white] (b) at (0.2,-2.5) {$w_3'$};
\node [rectangle, draw=white, fill=white] (b) at (-0.28,0) {$o$};
\path (-2,0.82) node (v1) [label = left: $v_{1}$] {} ;
\path (2,0.82) node (v2)[label = right: $v_{2}$]  {} ;
\path (0,1.6) node (v3) [label =  right: $v_{3}$] {} ;

\path (-2,1.42) node (v11) [label = left: $v_{1}'$] {} ;
\path (2,1.42) node (v22) [label =  right: $v_{2}'$] {} ;
\path (0,2.2) node (v33)[label = above left: $v_{3}'$]  {} ;

\path (-2,-0.82) node (w1) [label = left: $w_{1}$] {} ;
\path (2,-0.82) node (w2) [label =  right: $w_{2}$] {} ;
\path (0,-1.6) node (w3) [label =  below right: $w_{3}$] {} ;

\path (-2,-1.42) node (w11) [label =  left: $w_{1}'$] {} ;
\path (2,-1.42) node (w22) [label =  right: $w_{2}'$] {} ;
\path (0,-2.2) node (w33)  {} ;

\path (0,0) node (o) {} ;

\draw (v1)  --  (v2);
\draw (v3)  --  (v2);
\draw (v3)  --  (v1);

\draw (v11)  --  (v22);
\draw (v33)  --  (v22);
\draw (v33)  --  (v11);

\draw (v1)  --  (v11);
\draw (v2)  --  (v22);
\draw (v3)  --  (v33);

\draw (w1)  --  (w2);
\draw (w3)  --  (w2);
\draw (w3)  --  (w1);

\draw(w11)  --  (w22);
\draw(w33)  --  (w22);
\draw (w33)  --  (w11);

\draw (w1)  --  (w11);
\draw (w2)  --  (w22);
\draw (w3)  --  (w33);

\draw[lightgray] (o)  --  (v1);
\draw[lightgray] (o)  --  (v2);
\draw [lightgray](o)  --  (v3);
\draw[lightgray] (o)  --  (v11);
\draw[lightgray] (o)  --  (v22);
 \path
(o) edge [lightgray,bend left=17] (v33);

\draw [lightgray](o)  --  (w1);
\draw [lightgray](o)  --  (w2);
 \path
(o) edge [lightgray,bend right=17] (w33);
\draw[lightgray] (o)  --  (w3);
\draw [lightgray](o)  --  (w11);
\draw [lightgray](o)  --  (w22);

\draw (v1)  --  (w33);
\draw (w1)  --  (v33);
\draw (v3)  --  (w22);
\draw (w3)  --  (v22);
\draw (v2)  --  (w11);
\draw (w2)  --  (v11);
\end{tikzpicture}
\end{minipage}
\end{tabular}
\caption{The underlying graph of the $3$-dimensional isostatic framework with $\mathcal{C}_{3h}$ symmetry described in Example~\ref{ex:3dexamp}. The induced framework colours are illustrated in the left and centre images. For clarity, on the right all edges incident to the cone vertex $o$ are shown in gray.}
\label{fig:3dexamp}
\end{figure}
\end{example}


\section{Sufficient conditions for isostatic realisations.}
\label{sec:furwork}
In the previous sections, we have established necessary conditions for symmetric and non-symmetric bar-joint frameworks to be isostatic in general normed linear spaces. These conditions included both over-all counts and subgraph counts on the number of vertices and edges (recall Theorem \ref{thm:maxwell}), as well as counts on the number of vertices and edges that are fixed by various symmetry operations of the framework (recall Corollary~\ref{maxwellcor}).
It is natural to ask whether these necessary conditions are also sufficient for the framework to be isostatic. To investigate these questions, we introduce the following notation:

\begin{definition}
A graph $G=(V,E)$ is called \emph{$(k,\ell)$-tight} if $|E|= k|V|-\ell$ and $|E'|\leq k|V'|-\ell$ for all subgraphs $G'=(V',E')$ of $G$ with at least two vertices.
\end{definition}

\subsection{Euclidean frameworks.}
For the Euclidean plane, isostatic generic bar-joint frameworks were characterised combinatorially by Laman's landmark result from 1970, which says that a generic bar-joint framework $(G,p)$ in $(\mathbb{R}^2,\|\cdot\|_2)$ is isostatic if and only if the graph $G$ is $(2,3)$-tight. (See also \cite{tay}, for example, for an alternative characterisation in terms of tree decompositions.) The analogous questions for Euclidean frameworks in dimensions $d\geq 3$ remain long-standing open problems in discrete geometry, although significant partial results have been obtained for the special classes of body-bar, body-hinge and molecular frameworks.

Using a similar approach as in Section~\ref{sec:symmetric},  it was shown in \cite{cfgsw,owen}, that a symmetric isostatic bar-joint framework in $(\mathbb{R}^2,\|\cdot\|_2)$ must not only be $(2,3)$-tight, but must also satisfy some restrictions on the number of vertices and edges that are fixed by various symmetry operations of the framework. In particular, these restrictions imply that there are only five possible non-trivial symmetry groups which allow for  an isostatic bar-joint framework in $(\mathbb{R}^2,\|\cdot\|_2)$. These are the rotational groups $\mathcal{C}_2$ and $\mathcal{C}_3$, the reflection group $\mathcal{C}_s$, and the dihedral groups $\mathcal{C}_{2v}$ and $\mathcal{C}_{3v}$. In higher dimensions, all symmetry groups are possible, although restrictions on the number of fixed structural components still apply.

It was shown in \cite{schulze,BS4} that for the groups $\mathcal{C}_2$, $\mathcal{C}_3$ and $\mathcal{C}_s$ in the Euclidean plane, Laman's conditions (i.e., $(2,3)$-tightness), together with the added necessary conditions on the number of fixed structural components, are also sufficient for a $\Gamma$-generic framework to be isostatic. For the dihedral groups $\mathcal{C}_{2v}$ and $\mathcal{C}_{3v}$, however, the analogous conjectures are still open.

Note that an infinitesimally rigid symmetric framework in $(\mathbb{R}^2,\|\cdot\|_2)$ need not have a spanning isostatic subframework with the same symmetry. Therefore, for $\Gamma$-generic frameworks, infinitesimal rigidity can in general not be characterised in terms of isostatic subframeworks. Characterisations of $\Gamma$-generic infinitesimally rigid frameworks have so far only been obtained for the groups $\Gamma=\mathcal{C}_2,\mathcal{C}_3$ and $\mathcal{C}_s$ in dimension $2$, where $\Gamma$ acts freely on the vertices \cite{schtan}. These results were established by the introduction of new symmetry-adapted rigidity matrices and an analysis of their corresponding matroids on group-labeled quotient graphs. Extensions of these results to body-bar and body-hinge frameworks with $\mathbb{Z}_2\times \cdots \times \mathbb{Z}_2$ symmetry in higher-dimensional spaces can be found in \cite{schtan2}.

\subsection{Non-Euclidean frameworks.}
For the non-Euclidean norms $\|\cdot \|_q$, $1\leq q \leq \infty$, $q\neq 2$, and for the polyhedral norms, analogues of Laman's theorem have  recently been established in \cite{kit-pow} and \cite{kitson}. Specifically, it was shown in \cite{kit-pow} that a well-positioned regular bar-joint framework $(G,p)$ in  $(\mathbb{R}^2,\|\cdot \|_q)$, $1\leq q \leq \infty$, $q\neq 2$, is isostatic if and only if the graph $G$ is $(2,2)$-tight. Moreover, it was shown in \cite{kitson} that if $\|\cdot\|_\P$ is any polyhedral norm on $\mathbb{R}^2$, then there exists a well-positioned isostatic bar-joint framework $(G,p)$ in $(\mathbb{R}^2,\|\cdot\|_\P)$ if and only if the graph $G$ is $(2,2)$-tight. Symmetric analogues of these results have not yet been established. However, for some polyhedral norms on $\mathbb{R}^2$ and their possible symmetry groups, we will offer some observations and conjectures in the next section (Section~\ref{subsec:obsconj}).

As far as higher-dimensional spaces are concerned, it should be pointed out that for some non-Euclidean normed spaces, the problem of finding a combinatorial characterisation of (non-symmetric) isostatic bar-joint frameworks in dimension $d\geq 3$ might be slightly more accessible than its Euclidean counterpart, because in cases where rotations are no longer isometries, complexities such as the double-banana graph \cite{W1} may no longer occur.

\subsection{Observations and conjectures for $(\mathbb{R}^2, \|\cdot\|_\P)$, where $\P$ is a quadrilateral.} \label{subsec:obsconj}

For a well-positioned $\Gamma$-symmetric bar-joint framework $(G,p)$ in a space of the form $(\mathbb{R}^2, \|\cdot\|_\P)$, where $\P$ is a polyhedron,  to be isostatic, $G$ must be $(2,2)$-tight (by Theorem~\ref{thm:maxwell}) and $G$ must satisfy the conditions in Section \ref{sec:FiniteIsometryGroup}. However, if the unit ball $\P$ is a quadrilateral, then we can easily establish some further necessary conditions for isostaticity. We begin by considering frameworks with reflectional or half-turn symmetry. We first need the following definition.

\begin{definition}
 Let $\|\cdot \|_\P$ be a polyhedral norm on $\bR^2$ for which the unit ball $\P$ is a quadrilateral and let  $(G,p)$ a framework in $(\bR^2,\|\cdot\|_\P)$ which is $\Gamma$-symmetric with respect to $\theta:\Gamma\to \Aut(G)$ and  $\tau:\Gamma\to \GL(\bR^2)$.
A symmetry operation $\gamma\in \Gamma$  {\em preserves the facets} of  $\P$ if $\tau(\gamma)F\in \{F,-F\}$  for each facet $F$ of $\P$. Otherwise, we say that $\gamma$  {\em swaps the facets} of $\P$.
\end{definition}

\begin{proposition}
\label{FixedVertexLemma}
\label{thm:cscounts}
 Let $\|\cdot \|_\P$ be a polyhedral norm on $\bR^2$ for which the unit ball $\P$ is a quadrilateral.
Let  $(G,p)$ be a well-positioned isostatic framework which is $\Gamma$-symmetric with respect to $\theta:\Gamma\to \Aut(G)$ and
 $\tau:\Gamma\to \GL(\bR^2)$. Then $G$ is $(2,2)$-tight, and the following conditions hold:
\begin{itemize}
\item[(i)] For $\Gamma=\mathcal{C}_s$: If $s$ preserves the facets of $\P$ then
$|V_{s}|=1$, $|E_{s}|=0$, the degree of the fixed vertex is at least 4, and every $\mathcal{C}_s$-symmetric $(2,2)$-tight subgraph $H$ of $G$ contains the fixed vertex.

\item[(ii)] For  $\Gamma=\mathcal{C}_2$:
 there does not exist a $\mathcal{C}_2$-symmetric $(2,2)$-tight subgraph $H$ of $G$ which has no vertices or edges fixed by $C_2$, and either  $|V_{2}|=1$, $|E_{2}|=0$, and the degree of the fixed vertex is at least 4, or  $|V_{2}|=0$ and $|E_{2}|=2$.
\end{itemize}
\end{proposition}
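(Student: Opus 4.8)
The plan is to feed the Maxwell counts of Theorem~\ref{thm:maxwell} and the monochrome-subgraph description of isostatic frameworks in polyhedral norms into two elementary facts about automorphisms of finite trees. I would first note that a polyhedral norm has a finite group of linear isometries, so Theorem~\ref{thm:maxwell} gives $|E|=2|V|-2$ together with $|E(H)|\le 2|V(H)|-2$ for every subgraph $H$; hence $G$ is $(2,2)$-tight. Since the unit ball $\P$ is a centrally symmetric quadrilateral it is a parallelogram, and so its four facets form two antipodal pairs; as $(G,p)$ is well-positioned, each edge direction $p(v)-p(w)$ lies in the interior of the conical hull of a unique facet, and recording the pair it belongs to $2$-colours $E$. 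Writing $G_1,G_2$ for the two monochrome spanning subgraphs, the characterisation of \cite{kitson} (as recalled for $\|\cdot\|_\infty$ in Example~\ref{ex:2dexamp}) says that an isostatic $(G,p)$ must have both $G_1$ and $G_2$ equal to spanning trees of $G$. The key observation linking this to symmetry is that $\tau(\gamma)$ respects the $2$-colouring exactly when $\gamma$ preserves the facets of $\P$; this is the hypothesis in (i), and it is automatic in (ii) because $\tau(C_2)=-I$ maps each facet $F$ to $-F$. Hence in both cases $G_1$ and $G_2$ are $\Gamma$-symmetric spanning trees and, for any $\Gamma$-symmetric subgraph $H$, so are $H\cap G_1$ and $H\cap G_2$ (relative to their own vertex sets).

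Next I would isolate the two tree facts. (A) Every automorphism of a finite tree fixes a vertex or fixes an edge setwise, since it must fix the centre of the tree, which is a single vertex or a single edge. (B) An order-$2$ automorphism $\sigma$ of a finite tree which fixes no edge fixes exactly one vertex: if $u\ne u'$ were both fixed then the unique $u$--$u'$ path would be $\sigma$-invariant, an involution of a path fixing its two endpoints is the identity on that path, so every vertex of the path — and hence some edge of it — would be fixed, a contradiction; existence of a fixed vertex is (A). I would also record the counting remark that if $H$ is $(2,2)$-tight then $|E(H)|=2|V(H)|-2$ decomposes by colour into $|E(H\cap G_1)|+|E(H\cap G_2)|$, each summand being the number of edges of a forest inside the tree $G_i$ and therefore at most $|V(H)|-1$; equality throughout forces both $H\cap G_1$ and $H\cap G_2$ to be spanning trees on $V(H)$.

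For part (i): since $\dim X=2$, Corollary~\ref{Reflection}(ii) gives $|E_s|=0$, so (the action of $s$ on $G$ being non-trivial when $|V|\ge 2$) fact (B) applied to the tree $G_1$ produces a unique $s$-fixed vertex $v_0$, and as $V(G_1)=V$ this is the unique $s$-fixed vertex of $G$, i.e. $|V_s|=1$. This $v_0$ is not a leaf of $G_1$ (its one incident edge would be $s$-fixed) and $s$ permutes the edges of $G_1$ at $v_0$ with no fixed point, so $\deg_{G_1}(v_0)$ is even and $\ge 2$; likewise in $G_2$, and since these partition the edges of $G$ at $v_0$ we get $\deg_G(v_0)\ge 4$. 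If $H$ is a $\mathcal{C}_s$-symmetric $(2,2)$-tight subgraph then $H\cap G_1$ is a $\mathcal{C}_s$-symmetric spanning tree on $V(H)$ with no $s$-fixed edge, so by (B) it has a unique $s$-fixed vertex, which must be $v_0$; hence $v_0\in V(H)$. For part (ii): Corollary~\ref{2Fold}(ii) gives the stated dichotomy $|V_2|=0,\,|E_2|=2$ or $|V_2|=1,\,|E_2|=0$; in the latter case the argument of (i) (with $C_2$ for $s$) gives degree $\ge 4$ for the fixed vertex; and if $H$ were a $\mathcal{C}_2$-symmetric $(2,2)$-tight subgraph with no vertex or edge fixed by $C_2$, then $C_2$ would act on the tree $H\cap G_1$ with no fixed vertex and no fixed edge, contradicting (A).

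I expect the one genuinely non-routine ingredient to be pinning down the correct statement of \cite{kitson}: it is essential here that, precisely when $\P$ is a quadrilateral, the two monochrome subgraphs must literally be spanning trees (for general polyhedral norms the picture is more delicate), and a small pitfall to keep in mind is that $\tau(C_2)=-I$ always preserves the facet-pairs whereas a reflection need not — which is exactly why (i) but not (ii) carries a ``preserves the facets'' hypothesis. The case $|V|=1$ should be excluded or treated as vacuous, since then $G$ has no edges.
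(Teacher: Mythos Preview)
Your proposal is correct and follows the same overall strategy as the paper: both arguments hinge on the monochrome spanning-tree decomposition $G=T_1\cup T_2$ from \cite{kitson,kit-pow} together with the observation that a facet-preserving symmetry (automatic for $C_2$) permutes the two trees within themselves, so that all the work reduces to understanding an involution of a tree with no fixed edge.

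Where you diverge from the paper is in the tree-theoretic core. You invoke two general facts about tree automorphisms --- (A) the centre is fixed, and (B) an involution with no fixed edge has exactly one fixed vertex via the invariance of the unique path between two putative fixed vertices --- and then apply (B) to $G_1$ (for $|V_s|=1$) and to $H\cap G_1$ (for the subgraph claim), and (A) to $H\cap G_1$ in case (ii). The paper instead argues each of these points directly and more combinatorially: uniqueness of the fixed vertex comes from observing that a path $P$ in $T_1$ between two fixed vertices and its image $s(P)$ are distinct (since $|E_s|=0$) and so create a cycle in $T_1$; existence is a parity count ($|E(T_i)|$ even forces $|V|$ odd); and the subgraph claim is again a parity contradiction between even edge count and even vertex count. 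Your packaging via (A) and (B) is slightly cleaner and more reusable, while the paper's parity arguments are more self-contained and avoid appealing to the tree-centre theorem. Either route works; your counting remark that a $(2,2)$-tight subgraph $H$ must meet each $G_i$ in a spanning tree on $V(H)$ is a nice explicit step that the paper uses only implicitly.
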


\proof
$(i)$ By Theorem~\ref{thm:maxwell}, $G$ is $(2,2)$-tight, and by Corollary~\ref{Reflection} $(ii)$, we have $|E_s|=0$. Note that since $(G,p)$ is a well-positioned isostatic framework,  $G$ is the union of two edge-disjoint spanning trees $T_1$ and $T_2$ which are induced by the framework colouring of $(G,p)$ \cite{kit-pow} (recall also Example~\ref{ex:2dexamp}). Both $T_1$ and $T_2$ are $\C_s$-symmetric with respect to $\theta$, since $s$ preserves the facets of $\P$. To see this, suppose without loss of generality that $vw$ is an edge of $T_1$, and let $F$ and $-F$ be the facets of the unit ball $\P$ corresponding to $T_1$, that is, $p(v)-p(w)$ is contained in the conical hull of either the facet $F$ or the facet $-F$.
Since $s$  preserves the facets of $\P$ and $p(s v)-p(s w)=\tau(s)(p(v)-p(w))$, it follows that
$p(s v)-p(s w)$ is contained in the conical hull of either $F$ or $-F$.
Thus the edge $s(vw)$ has the same framework colour as $vw$, and so $s(vw)\in E(T_1)$.

Suppose there exist two distinct vertices $v_0$ and $v_1$ which are fixed by  $s$. Then $T_1$ contains a simple path $P$ from $v_0$ to $v_1$. Since $T_1$ is $\C_s$-symmetric,  $s(P)$ is also a simple path in $T_1$ from $v_0$ to $v_1$.
Also, since $|E_s|=0$, $P\not=s(P)$. Thus $P\cup s(P)$ contains a cycle and is a subgraph of $T_1$. This is a contradiction and so there is at most one vertex in $G$ which is fixed by $s$.

The number of edges in each $\C_s$-symmetric spanning tree $T_i$ must be even since each edge $e\in E(T_i)$ has a distinct reflection $se$ which also lies in $T_i$. Each spanning tree contains $|V|-1$ many edges and so $|V|$ is odd. In particular, there must exist at least one vertex of $G$ which is fixed by $s$ since every vertex $v\in V$ which is not fixed by $s$ has a distinct reflection $sv$. As shown above, we have $|V_s|\leq 1$ and so we conclude that there exists exactly one vertex in $G$ which is fixed by $s$.

If the fixed vertex $v_0$ is adjacent to an edge $e\in E$ then $v_0$ is also adjacent to $se$.
Since $|E_s|=0$, $e\not=se$ and so $v_0$ must have even degree in $G$.
Since $T_1$ and $T_2$ are edge-disjoint spanning trees of $G$, $v_0$ is adjacent to some edge $e$ in $T_1$ and some edge $f$ in $T_2$ with $e\not=f$.
Since $v_0$ is fixed by $s$, $v_0$ is also adjacent to the edges $se$ in $T_1$ and $sf$ in $T_2$.
The edges $e,f,se,sf$ are necessarily distinct, and so $v_0$ has degree at least $4$ in $G$.

Finally, suppose $G$ has a $\mathcal{C}_s$-symmetric $(2,2)$-tight subgraph $H$ which has no vertices fixed by $s$. Then $H$ is the union of the two edge-disjoint spanning trees $H\cap T_1$ and $H\cap T_2$. Both of these trees have an even number of edges since $|E_s|=0$, but since $H$ has no vertex fixed by $s$, they also have an even number of vertices, a contradiction. (See also Figures~\ref{hcupsh} (a), (b)).

The proof of $(ii)$ is completely analogous to $(i)$. Simply recall Corollary \ref{2Fold} and note that the half-turn $C_2$  clearly preserves the facets of $\P$.
\endproof

\begin{conjecture} The conditions in Proposition~\ref{FixedVertexLemma}  are also sufficient for the existence of a well-positioned isostatic  framework $(G,p)$ in $(\bR^2,\|\cdot\|_\P)$ which is $\Gamma$-symmetric with respect to $\theta$ and $\tau$.
\end{conjecture}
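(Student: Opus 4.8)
The plan is to prove sufficiency by a constructive/inductive argument on the structure of $(2,2)$-tight graphs carrying the relevant symmetry, mirroring the strategy used in the Euclidean case \cite{schulze,BS4} but replacing Henneberg-type moves with moves that respect the two-coloured tree decomposition of $(\bR^2,\|\cdot\|_\P)$-isostatic frameworks from \cite{kit-pow,kitson}. Since a well-positioned framework in $(\bR^2,\|\cdot\|_\P)$ is isostatic if and only if its two induced monochrome subgraphs are spanning trees, the task is really combinatorial: given an abstract $\mathcal{C}_s$- or $\mathcal{C}_2$-symmetric $(2,2)$-tight graph $G$ satisfying the fixed-component conditions of Proposition~\ref{FixedVertexLemma}, produce a symmetric partition $E=E(T_1)\sqcup E(T_2)$ into two spanning trees together with a compatible placement $p$ realising each $T_i$ on a designated pair of opposite facets of $\P$. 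The conditions of the proposition---$|V_s|=1$ with the fixed vertex of degree $\geq 4$ lying in every symmetric $(2,2)$-tight subgraph (respectively the $\mathcal{C}_2$ analogue)---are precisely what is needed to guarantee at each stage that the symmetric spanning-tree partition can be maintained.

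First I would set up a symmetry-adapted recursive characterisation of the graph class: show that every $\mathcal{C}_s$-symmetric $(2,2)$-tight graph with exactly one fixed vertex $v_0$ of degree $\geq 4$ and with $v_0$ in every symmetric $(2,2)$-tight subgraph can be reduced, via a symmetric inverse Henneberg-type operation (removing an orbit of degree-$2$ vertices, or a symmetric edge-split), to a smaller graph in the same class, with the base case being a small explicit graph (e.g.\ the symmetric ``double spanning tree'' on three vertices). This is the analogue of the recursive constructions for $\mathcal{C}_s$- and $\mathcal{C}_2$-symmetric $(2,3)$-tight graphs in \cite{schulze,BS4}, but adapted to the $(2,2)$ count; I expect the count bookkeeping (two fewer edges, hence degree-$2$ rather than degree-$3$ reduction vertices) to make the reductions somewhat cleaner than in the Euclidean setting. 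Crucially, the subgraph condition forces the reduction vertices to avoid $v_0$, so the fixed vertex persists throughout the recursion.

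Next I would verify that each such move can be realised geometrically: given an isostatic placement of the reduced framework in $(\bR^2,\|\cdot\|_\P)$, I extend it to the larger graph by placing the new orbit of vertices so that (a) the framework remains well-positioned, i.e.\ each new edge direction lies in the interior of the conical hull of a facet of $\P$, and (b) the monochrome-subgraph-equals-spanning-tree criterion of \cite{kit-pow} is preserved and compatible with the symmetry. For a vertex $v_0$-avoiding orbit this is a local, finite-dimensional genericity argument: the set of bad placements (those making the monochrome subgraphs fail to be trees, or violating well-positionedness) is a finite union of proper algebraic/linear conditions, so a suitable symmetric placement exists. The base case is checked by hand.

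The main obstacle I anticipate is the first step---establishing the symmetric recursive construction of the graph class with the correct ``fixed vertex in every symmetric $(2,2)$-tight subgraph'' hypothesis propagating down the recursion. In the Euclidean $\mathcal{C}_s$, $\mathcal{C}_2$ cases this required delicate matroid-union / submodularity arguments on the count function $|E'|-2|V'|+2$, and one must check that a symmetric reduction vertex of degree $2$ always exists and that performing the reduction does not create a new symmetric $(2,2)$-tight subgraph omitting $v_0$ (which would obstruct re-extension). A secondary subtlety, specific to the polyhedral setting, is the facet-swapping versus facet-preserving dichotomy: the proposition only treats the facet-preserving case for $\mathcal{C}_s$, so the conjecture as stated concerns exactly that regime, and one must make sure the geometric extension step always uses the facet assignment consistent with $s$ preserving facets. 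If the combinatorial recursion can be pushed through, the geometric realisation step should follow by the standard genericity machinery already implicit in \cite{kit-pow,kitson}.
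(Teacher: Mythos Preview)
The statement you are attempting to prove is labelled a \emph{Conjecture} in the paper, not a theorem: the paper does not provide a proof and explicitly leaves the sufficiency direction open. Immediately after the statement, the paper writes that ``a natural approach to prove this conjecture is via an inductive construction scheme similar to the ones used in \cite{schulze,BS4} for Euclidean symmetric frameworks,'' but that ``this would require some new symmetry-adapted Henneberg-type graph operations on $(2,2)$-tight graphs with a $\mathbb{Z}_2$-action,'' together with geometric placement arguments for each such operation. Your proposal is exactly this suggested approach, so there is no discrepancy to flag between your route and the paper's---rather, you are sketching a programme for an open problem.

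That said, what you have written is a plan, not a proof, and the gaps you yourself identify are genuine and substantial. The key missing ingredient is the symmetric recursive characterisation of $(2,2)$-tight graphs with a $\mathbb{Z}_2$-action satisfying the fixed-vertex and symmetric-subgraph conditions of Proposition~\ref{FixedVertexLemma}: such a characterisation does not exist in the literature (the cited results of \cite{schulze,BS4} are for $(2,3)$-tight graphs), and establishing it is the heart of the problem. In particular, you assert that a symmetric degree-$2$ orbit disjoint from $v_0$ always exists and that removing it preserves the hypotheses, but neither claim is proved, and the $(2,2)$-count admits configurations (e.g.\ symmetric $4$-regular pieces) with no degree-$2$ vertices at all, so additional reduction moves beyond vertex deletion would certainly be needed. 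The geometric realisation step is also not routine: the ``standard genericity machinery'' of \cite{kit-pow,kitson} is for unconstrained placements, and it is not automatic that a \emph{symmetric} placement avoiding all bad conditions exists, since the symmetry constraint cuts the configuration space in half and may force some of the bad loci to be unavoidable. Until both of these pieces are supplied, the conjecture remains open.
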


A natural approach to prove this conjecture is via an inductive construction scheme similar to the ones used in \cite{schulze,BS4} for Euclidean symmetric frameworks. However, this would require some new symmetry-adapted Henneberg-type graph operations on $(2,2)$-tight graphs with a $\mathbb{Z}_2$-action, and for each of these operations, one would have to show that it preserves isostaticity by choosing appropriate geometric placements for the new vertices. 

\begin{conjecture}  Let $\|\cdot \|_\P$ be a polyhedral norm on $\bR^2$ for which the unit ball $\P$ is a quadrilateral.
Let $G$ be a finite simple graph, let  $\theta:\C_s\to \Aut(G)$ be an action of  $\C_s$, and
 $\tau:\C_s\to \GL(\bR^2)$ be a faithful group representation so that $s$ swaps the facets of $\P$. Then there exists $p$ such that $(G,p)$ is well-positioned and isostatic  in $(\bR^2,\|\cdot\|_\P)$ and $\C_s$-symmetric with respect to $\theta$ and $\tau$ if and only if $G$ is $(2,2)$-tight and $|E_s|=0$.
\end{conjecture}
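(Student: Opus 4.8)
The plan is to reduce isostaticity to a purely combinatorial condition via the facet–colouring description of \cite{kit-pow,kitson}, and then to marry that description with the symmetry reduction of Section~\ref{sec:symmetric}. Since $\P$ is a quadrilateral, hence a parallelogram, its facets fall into two antipodal pairs $\{F_1,-F_1\}$ and $\{F_2,-F_2\}$ with linearly independent support functionals $f_1,f_2$. A well-positioned framework $(G,p)$ in $(\bR^2,\|\cdot\|_\P)$ therefore induces a partition $E=E(G_1)\sqcup E(G_2)$, where $vw\in E(G_i)$ according as $p(v)-p(w)$ lies in $\pm C_{F_i}$ (the open cone over $F_i$ or $-F_i$), and, as in the proof of Proposition~\ref{FixedVertexLemma}, $(G,p)$ is isostatic if and only if both $G_1$ and $G_2$ are spanning trees. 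Now if $(G,p)$ is in addition $\C_s$-symmetric with $s$ swapping the facets, then $\tau(s)$ maps the pair $\{F_1,-F_1\}$ onto $\{F_2,-F_2\}$, so from $p(sv)-p(sw)=\tau(s)(p(v)-p(w))$ one gets $\theta(s)(E(G_1))=E(G_2)$ \emph{for every} well-positioned symmetric placement; since $G_1\cong G_2$ this forces $|E(G_1)|=|E(G_2)|=|E|/2$. Hence such a $(G,p)$ is isostatic precisely when $|E|=2|V|-2$ and the single subgraph $G_1$ is connected (equivalently, acyclic, equivalently a spanning tree).

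For necessity, suppose an isostatic well-positioned $\C_s$-symmetric $p$ exists. The isometry group of $\|\cdot\|_\P$ is finite, so Theorem~\ref{thm:maxwell} gives that $G$ is $(2,2)$-tight. For $|E_s|=0$: an edge $vw$ fixed by $s$ forces $p(v)-p(w)\in\ker(I-\tau(s))$ (if $sv=v,sw=w$) or $p(v)-p(w)\in\ker(I+\tau(s))$ (if $sv=w,sw=v$); being well-positioned, $p(v)-p(w)$ also lies in the open cone $C_F$ over some facet $F$, and applying $\tau(s)$ yields $C_F=C_{\tau(s)F}$ in the first case and $C_F=C_{-\tau(s)F}$ in the second, i.e. $\tau(s)F\in\{F,-F\}$; but an isometry of a parallelogram either fixes both facet pairs setwise or swaps them, and ``$s$ swaps the facets'' rules out the former, so $\tau(s)F\notin\{F,-F\}$ for all $F$. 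This contradiction gives $|E_s|=0$.

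For sufficiency I would proceed in two steps. \emph{Combinatorial step:} show that a $(2,2)$-tight graph $G$ with a $\bZ_2$-action $\theta$ satisfying $|E_s|=0$ admits a partition $E=E_1\sqcup E_2$ with $\theta(s)E_1=E_2$ such that $(V,E_1)$ and $(V,E_2)$ are spanning trees; equivalently (all edge-orbits have size $2$ since $|E_s|=0$), $E_1$ is a transversal of the edge-orbits that is a spanning tree with spanning-tree complement. \emph{Geometric step:} given such a decomposition, realise it by a $\C_s$-symmetric well-positioned placement whose colour-$1$ subgraph is exactly $(V,E_1)$. Working in the coordinates $(a,b)=(f_1,f_2)$, in which $\tau(s)$ is the swap $(a,b)\mapsto(b,a)$ and $\pm C_{F_1}=\{(a,b):|a|>|b|\}$, a symmetric placement is prescribed by choosing $p(v)$ freely for one vertex of each free orbit and on the line $a=b$ for each $s$-fixed vertex; by the paragraph above it then suffices to arrange $|a_v-a_w|>|b_v-b_w|$ for every $vw\in E_1$, the condition for the $E_2$-edges following automatically from equivariance and $\tau(s)(\pm C_{F_1})=\pm C_{F_2}$. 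Writing $a_v=c_v+\delta_v$, $b_v=c_v-\delta_v$ with $(c_v)$ $\bZ_2$-invariant and $(\delta_v)$ $\bZ_2$-anti-invariant, this becomes the requirement that $c_v-c_w$ and $\delta_v-\delta_w$ have the same sign along the tree $E_1$; one then picks such $(\delta_v)$ and $(c_v)$ in general position (so that no $p(v)-p(w)$ meets a cone boundary), and the criterion of \cite{kitson} delivers isostaticity.

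The main obstacle is the combinatorial step: it is a ``symmetric matroid base-partition'' statement that does not follow from the Nash–Williams/Tutte decomposition of a $(2,2)$-tight graph into two spanning trees, because here the two trees must be interchanged by $s$. I expect it to require either a symmetry-adapted matroid-union argument on the associated $\bZ_2$-gain graph, or an inductive construction on $|V|$ via $\bZ_2$-equivariant reductions of $(2,2)$-tight graphs (symmetry-adapted Henneberg-type moves in the spirit of \cite{schulze,BS4}), with the small-minimum-degree cases treated directly and the coupling of $v$ with $sv$ preserved at every stage. The same coupling is what keeps the geometric step from being routine: realising the prescribed signs along $E_1$ while respecting $c_{sv}=c_v$ and $\delta_{sv}=-\delta_v$ amounts to requiring that the orientation of $E_1$ induced by $(\delta_v)$ descend to an acyclic orientation of the quotient graph, and one is led to choose $E_1$ in the combinatorial step precisely so that this is possible. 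Establishing that such a choice always exists is, I believe, the crux of the whole argument.
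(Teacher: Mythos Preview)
The statement you are addressing is a \emph{conjecture} in the paper, not a theorem; the paper does not offer a proof. What the paper does establish is the necessity direction only, and it does so in one line: $(2,2)$-tightness follows from Theorem~\ref{thm:maxwell} together with Proposition~\ref{FiniteIsometryGroup}, and $|E_s|=0$ is simply Corollary~\ref{Reflection}(ii), which holds for \emph{any} reflection in a two-dimensional space with finite isometry group, irrespective of whether $s$ preserves or swaps the facets. Your geometric argument for $|E_s|=0$ (analysing $\ker(I\pm\tau(s))$ against the open facet cones) is correct, but it is more work than needed here: the character count already forces $|E_s|=0$ without using the swap hypothesis at all. The paper then states explicitly that ``the converse, however, remains open''.

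Your treatment of sufficiency is therefore not to be compared with a proof in the paper, since none exists. That said, your outline is consonant with the paper's own remarks: the paper suggests that such converses ``would require some new symmetry-adapted Henneberg-type graph operations on $(2,2)$-tight graphs with a $\bZ_2$-action'', which is exactly the inductive route you flag as one option for the combinatorial step. Your reformulation---finding a transversal of the size-two edge orbits that is a spanning tree with spanning-tree complement---is a clean way to phrase the target, and your observation that the geometric realisation reduces (in $(f_1,f_2)$-coordinates) to a sign-consistency condition along the chosen tree is correct. You are also right to identify the existence of such an $s$-swapped tree decomposition as the crux: Nash--Williams/Tutte gives two edge-disjoint spanning trees, but not a $\theta(s)$-conjugate pair, and it is precisely this equivariant strengthening that the paper leaves open.
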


Clearly, if $(G,p)$ is a well-positioned, isostatic and $\C_2$-symmetric framework in $(\bR^2,\|\cdot\|_\P)$, then $G$ is $(2,2)$-tight and $|E_s|=0$ (recall Corollary~\ref{Reflection} $(ii)$). The converse, however, remains open. Note that, as opposed to the case where $s$ preserves the facets of $\P$,  there is no restriction on the number of vertices of $G$ that are fixed by the reflection (recall Figure~\ref{fig:henstart} (b), for example). Moreover, $(G,p)$ could possibly have a $\mathcal{C}_s$-symmetric $(2,2)$-tight subgraph $H$ which has no vertices or edges fixed by $s$ in this case, as the example in Figure~\ref{hcupsh} (c) illustrates.

\begin{figure}[htp]
\begin{center}
\begin{tikzpicture}[very thick,scale=1]
\tikzstyle{every node}=[circle, draw=black, fill=white, inner sep=0pt, minimum width=5pt];

\draw[dashed,thin] (0,-0.6)  --  (0,0.6);

\path (-1.3,-0.2) node (p1)  {} ;
\path (-1.3,0.4) node (p2)  {} ;
\path (-0.6,-0.4) node (p3)  {} ;
\path (-0.6,0.3) node (p4)  {} ;

\path (1.3,-0.2) node (p11)  {} ;
\path (1.3,0.4) node (p22)  {} ;
\path (0.6,-0.4) node (p33)  {} ;
\path (0.6,0.3) node (p44)  {} ;

\draw[gray]  (p1)  --  (p2);
\draw (p1)  --  (p3);
\draw(p1)  --  (p4);
\draw[gray] (p2)  --  (p3);
\draw(p2)  --  (p4);
\draw[gray] (p3)  --  (p4);

\draw[gray]  (p11)  --  (p22);
\draw (p11)  --  (p33);
\draw(p11)  --  (p44);
\draw[gray] (p22)  --  (p33);
\draw(p22)  --  (p44);
\draw[gray] (p33)  --  (p44);

\draw(p3)  --  (p44);
\draw(p4)  --  (p33);
\node [rectangle, draw=white, fill=white] (b) at (0,-1.45) {(a)};
\end{tikzpicture}
\hspace{1.3cm}
\begin{tikzpicture}[very thick,scale=1]
\tikzstyle{every node}=[circle, draw=black, fill=white, inner sep=0pt, minimum width=5pt];

\path (-1.3,-0.2) node (p1)  {} ;
\path (-1.3,0.4) node (p2)  {} ;
\path (-0.6,-0.4) node (p3)  {} ;
\path (-0.6,0.3) node (p4)  {} ;

\path (1.3,0.2) node (p11)  {} ;
\path (1.3,-0.4) node (p22)  {} ;
\path (0.6,0.4) node (p33)  {} ;
\path (0.6,-0.3) node (p44)  {} ;

\draw[gray]  (p1)  --  (p2);
\draw (p1)  --  (p3);
\draw(p1)  --  (p4);
\draw[gray] (p2)  --  (p3);
\draw(p2)  --  (p4);
\draw[gray] (p3)  --  (p4);

\draw[gray]  (p11)  --  (p22);
\draw (p11)  --  (p33);
\draw(p11)  --  (p44);
\draw[gray] (p22)  --  (p33);
\draw(p22)  --  (p44);
\draw[gray] (p33)  --  (p44);

\draw(p3)  --  (p44);
\draw(p4)  --  (p33);
\node [rectangle, draw=white, fill=white] (b) at (0,-1.45) {(b)};
\end{tikzpicture}
\hspace{1.3cm}
\begin{tikzpicture}[very thick,scale=1]
\tikzstyle{every node}=[circle, draw=black, fill=white, inner sep=0pt, minimum width=5pt];

\draw[dashed,thin] (-0.8,-0.8)  --  (0.8,0.8);

\path (-1.1,0) node (p1)  {} ;
\path (-1,0.9) node (p2)  {} ;
\path (-0.4,0) node (p3)  {} ;
\path (-0.3,0.6) node (p4)  {} ;

\path (0,-1.1) node (p11)  {} ;
\path (0.9,-1) node (p22)  {} ;
\path (0,-0.4) node (p33)  {} ;
\path (0.6,-0.3) node (p44)  {} ;

\draw[gray]  (p1)  --  (p2);
\draw (p1)  --  (p3);
\draw(p1)  --  (p4);
\draw[gray] (p2)  --  (p3);
\draw(p2)  --  (p4);
\draw[gray] (p3)  --  (p4);

\draw (p11)  --  (p22);
\draw [gray] (p11)  --  (p33);
\draw[gray] (p11)  --  (p44);
\draw (p22)  --  (p33);
\draw[gray] (p22)  --  (p44);
\draw (p33)  --  (p44);

\draw (p3)  --  (p44);
\draw[gray](p4)  --  (p33);
\node [rectangle, draw=white, fill=white] (c) at (0.4,-1.45) {(c)};
\end{tikzpicture}
     \end{center}
     \caption{(a),(b) Examples of $\mathcal{C}_s$- and $\mathcal{C}_2$-symmetric frameworks in $(\bR^2,\|\cdot\|_\infty)$ which are not isostatic. (c) A $\mathcal{C}_s$-symmetric isostatic framework in $(\bR^2,\|\cdot\|_\infty)$.}
\label{hcupsh}
\end{figure}
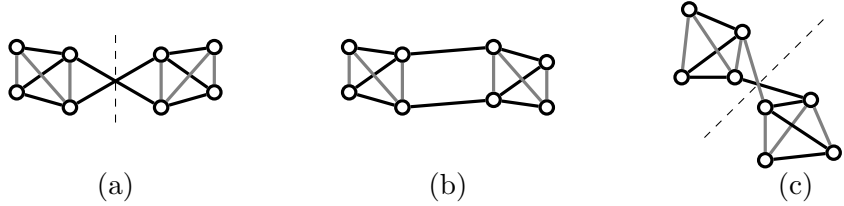

For the remaining symmetry groups in dimension $2$ which are possible for a quadrilateral unit ball $\P$, i.e., for the groups $\mathcal{C}_4$, $\mathcal{C}_{2v}$ and $\mathcal{C}_{4v}$, we propose the following conjecture:

\begin{conjecture}
\label{thm:cscounts}
Let $\|\cdot \|_\P$ be a polyhedral norm on $\bR^2$ for which the unit ball $\P$ is a quadrilateral.
Let $G$ be a finite simple graph, $\Gamma$ be a group, $\theta:\Gamma\to \Aut(G)$ be an action of  $\Gamma$, and
$\tau:\Gamma\to \GL(\bR^2)$ be a faithful group representation. 
The following are equivalent:
\begin{itemize}
\item[(A)] There exists $p$ such that $(G,p)$ is well-positioned and isostatic  in $(\bR^2,\|\cdot\|_\P)$ and $\Gamma$-symmetric with respect to $\theta$ and $\tau$.

\item[(B)] \begin{itemize}

\item[(i)] For $\Gamma=\C_4=\langle  r  \rangle$: $G$ is $(2,2)$-tight, there does not exist a $\langle r^2 \rangle$-symmetric $(2,2)$-tight subgraph $H$ of $G$ which has no vertices or edges fixed by $r^2$, and either  $|V_{r^2}|=1$, $|E_{r^2}|=0$, or  $|V_{r^2}|=0$ and $|E_{r^2}|=2$.

\item[(ii)] For $\Gamma=\mathcal{C}_{2v}=\langle s, r\rangle$, where $s$ is a reflection  which preserves the facets of $\P$, and  $r$ is the half-turn: $G$ is $(2,2)$-tight, for every non-trivial element $\gamma\in \Gamma$, there does not exist a $\langle \gamma \rangle$-symmetric $(2,2)$-tight subgraph $H$ of $G$ which has no vertices or edges fixed by $\gamma$, $|V_{\gamma}|=1$ and $|E_{\gamma}|=0$.

\item[(iii)] For $\Gamma=\mathcal{C}_{2v}=\langle s, r\rangle$, where $s$ is a reflection  which swaps the facets of $\P$, and  $r$ is the half-turn: $G$ is $(2,2)$-tight,  there does not exist a $\langle r \rangle$-symmetric $(2,2)$-tight subgraph $H$ of $G$ which has no vertices or edges fixed by $r$, $|E_{s}|=|E_{r\cdot s}|= 0$, and either  $|V_{r}|=1$, $|E_{r}|=0$, or  $|V_{r}|=0$ and $|E_{r}|=2$.

\item[(iv)] For $\Gamma=\mathcal{C}_{4v}=\langle s, r \rangle$, where $s$ is a reflection  which preserves the facets of $\P$, and  $r$ is a $4$-fold rotation: $G$ is $(2,2)$-tight, for $\gamma\in \{s, r^2,r^2\cdot s\}$, there does not exist a $\langle \gamma \rangle$-symmetric $(2,2)$-tight subgraph $H$ of $G$ which has no vertices or edges fixed by $\gamma$, $|E_{s}|=|E_{r^2\cdot s}|= 0$, and either  $|V_{r^2}|=1$, $|E_{r^2}|=0$, or  $|V_{r^2}|=0$ and $|E_{r^2}|=2$.
\end{itemize}
\end{itemize}
\end{conjecture}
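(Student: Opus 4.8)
The plan is to prove the two implications separately. The direction $(A)\Rightarrow(B)$ is within reach of the machinery already developed; the direction $(B)\Rightarrow(A)$ requires genuinely new combinatorial input, which is why the statement is phrased as a conjecture.

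\emph{The direction $(A)\Rightarrow(B)$.} Suppose $(G,p)$ is well-positioned, isostatic and $\Gamma$-symmetric. Then $G$ is $(2,2)$-tight by Theorem~\ref{thm:maxwell}, and the counts on the numbers of fixed vertices and edges follow from the corollaries of Section~\ref{sec:FiniteIsometryGroup}: Corollary~\ref{Reflection}$(ii)$ gives $|E_\gamma|=0$ for every reflection $\gamma$ occurring in $\Gamma$, while Corollary~\ref{2Fold}$(ii)$, respectively Corollary~\ref{4Fold}$(ii)$, supplies the dichotomy for the half-turn ($r$ in $\C_{2v}$, and $r^2$ in $\C_4$ and $\C_{4v}$). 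The non-existence of a $\langle\gamma\rangle$-symmetric $(2,2)$-tight subgraph $H$ with no structural components fixed by $\gamma$ is proved exactly as in Proposition~\ref{FixedVertexLemma}: by \cite{kit-pow,kitson} the edge set of the isostatic framework $(G,p)$ splits into two monochrome spanning trees $T_1,T_2$ determined by the two antipodal pairs of facets of $\P$; a symmetry operation that preserves the facets of $\P$ preserves this colouring, so $T_1$ and $T_2$ are each $\langle\gamma\rangle$-symmetric, and the parity/cycle argument of Proposition~\ref{FixedVertexLemma} applied to $H\cap T_1$ and $H\cap T_2$ yields a contradiction; a reflection $s$ that swaps the facets interchanges $T_1$ and $T_2$, and an analogous argument goes through after passing to $\langle s\rangle$-orbits. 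The remaining numerical constraints (e.g.\ that in $\C_{2v}$ and $\C_{4v}$ the second facet-preserving reflection $r^2\cdot s$ forces its fixed-vertex and fixed-edge counts to be $1$ and $0$) are read off from the group structure together with the Remark following Corollary~\ref{4Fold} on where fixed vertices and edges must lie.

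\emph{The direction $(B)\Rightarrow(A)$.} The natural strategy is an inductive construction modelled on \cite{schulze,BS4}. First one would establish a \emph{symmetry-adapted Henneberg-type recursion}: every finite simple graph $G$ carrying a faithful action $\theta$ of $\Gamma\in\{\C_4,\C_{2v},\C_{4v}\}$ and satisfying the combinatorial conditions in (B) can be reduced, by a sequence of $\Gamma$-symmetric inverse extension moves (symmetric vertex deletions and symmetric edge-split reversals, performed on whole vertex and edge orbits), to a member of a short list of base graphs, with every intermediate graph again satisfying (B). Second, for each forward move one proves a \emph{geometric realisation lemma}: given a well-positioned isostatic $\Gamma$-symmetric framework on the smaller graph, the newly introduced vertices can be placed at $\Gamma$-symmetric positions --- a fixed vertex necessarily at the origin (for the half-turn) or on the appropriate mirror line (for a reflection), as forced by the Remark after Corollary~\ref{4Fold} --- so that the enlarged framework is well-positioned and its two monochrome subgraphs are again spanning trees; by \cite{kitson} the enlarged framework is then isostatic. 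Third, one checks the base cases by exhibiting explicit placements, as in Example~\ref{ex:2dexamp}.

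\emph{The main obstacle.} The bottleneck is the combinatorial step. Even for the Euclidean dihedral groups $\C_{2v}$ and $\C_{3v}$ the analogous symmetric Henneberg characterisation is open, and here one faces the same difficulty of showing that the admissible symmetric reduction moves are numerous enough that the only graphs which cannot be reduced are the base graphs; the forbidden-subgraph clauses in (B) should be precisely what rules out the configurations in which every candidate move would either violate $(2,2)$-sparsity or create a bad symmetric subgraph, but turning this heuristic into a proof is delicate. For $\C_4$ and $\C_{4v}$ the situation is heavier still, since a $4$-fold rotation has vertex-orbits of size $1$ or $4$ and edge-orbits of size $2$ or $4$, so many more move types must be analysed; and item (iii), the subcase of $\C_{2v}$ in which $s$ swaps the facets of $\P$, is harder because the framework colouring is \emph{not} preserved, so the two-spanning-tree structure is less directly available. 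By contrast the geometric realisation lemmas, while technical, should be comparatively routine: only finitely many facet-hyperplanes must be avoided, so a generic $\Gamma$-symmetric placement of the new vertices is well-positioned, and the colour of each new edge orbit can be prescribed so as to keep both monochrome subgraphs spanning --- this is the mechanism already exploited in \cite{kit-pow,kitson} in the non-symmetric setting. I therefore expect the combinatorial inductive characterisation, rather than the analytic part, to be where the real work lies.
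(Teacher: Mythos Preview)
Your treatment is essentially the same as the paper's: the statement is a conjecture, the paper only asserts that $(A)\Rightarrow(B)$ ``is easy to see'' from Sections~\ref{sec:basicdef} and \ref{sec:FiniteIsometryGroup} together with the spanning-tree arguments of Proposition~\ref{FixedVertexLemma}, and explicitly leaves $(B)\Rightarrow(A)$ open. Your outline of the forward direction and your proposed inductive strategy for the converse (with its identified bottleneck) are in the same spirit.

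One correction is warranted. You claim that for a facet-swapping reflection $s$ ``an analogous argument goes through after passing to $\langle s\rangle$-orbits'' to rule out $\langle s\rangle$-symmetric $(2,2)$-tight subgraphs with no fixed components. This is false: the paper notes just before the conjecture, with reference to Figure~\ref{hcupsh}(c), that when $s$ swaps the facets of $\P$ such subgraphs \emph{can} occur in isostatic frameworks. The point is that when $s$ interchanges $T_1$ and $T_2$, the parity argument no longer yields a contradiction (both $H\cap T_i$ have $|V(H)|-1$ edges with $|V(H)|$ even, which is consistent). Fortunately this overclaim is harmless for $(A)\Rightarrow(B)$: in each of the four cases the subgraph non-existence clause is imposed \emph{only} for symmetry operations that preserve the facets of $\P$ (namely $r^2$ in (i); $s,r,rs$ in (ii); $r$ alone in (iii); and $s,r^2,r^2s$ in (iv)), so Proposition~\ref{FixedVertexLemma} applied to the relevant $\bZ_2$-subgroups suffices. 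You should simply drop the facet-swapping claim.

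A smaller point: in case (ii) you need $|V_\gamma|=1$ for \emph{every} non-trivial $\gamma$, including the half-turn $r$; Corollary~\ref{2Fold} alone leaves open $|V_r|=0$. The missing step is that since $\C_{2v}$ is abelian and $|V_s|=1$ (from Proposition~\ref{FixedVertexLemma}(i)), the unique $s$-fixed vertex $v_0$ satisfies $s(r v_0)=r(s v_0)=r v_0$, forcing $r v_0=v_0$ and hence $|V_r|\geq 1$.
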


As before, using the results of Sections~\ref{sec:basicdef} and \ref{sec:FiniteIsometryGroup}, it is easy to see that (A) implies (B). In particular, as shown in Figure~\ref{hcupsh1}, it is easy to construct non-isostatic frameworks which satisfy all the conditions in Conjecture~\ref{thm:cscounts}, except for the condition on the existence of a symmetric $(2,2)$-tight subgraph which has no vertices or edges fixed by a reflection or a half-turn. All the converse directions  of Conjecture~\ref{thm:cscounts}, however, remain open.

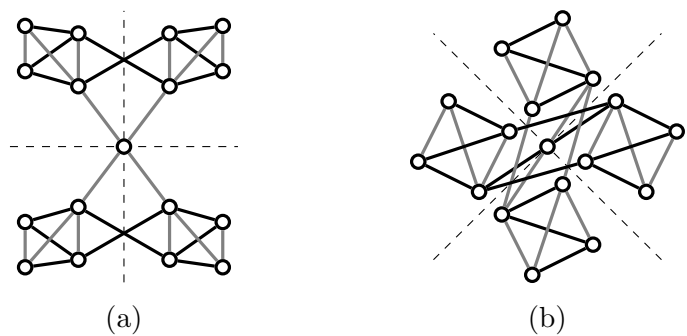
\begin{figure}[htp]
\begin{center}
\begin{tikzpicture}[very thick,scale=1]
\tikzstyle{every node}=[circle, draw=black, fill=white, inner sep=0pt, minimum width=5pt];

\draw[dashed,thin] (0,-1.8)  --  (0,1.8);
\draw[dashed,thin] (-1.5,0)  --  (1.5,0);

\path (-1.3,1) node (p1)  {} ;
\path (-1.3,1.6) node (p2)  {} ;
\path (-0.6,0.8) node (p3)  {} ;
\path (-0.6,1.5) node (p4)  {} ;

\path (1.3,1) node (p11)  {} ;
\path (1.3,1.6) node (p22)  {} ;
\path (0.6,0.8) node (p33)  {} ;
\path (0.6,1.5) node (p44)  {} ;

\draw[gray]  (p1)  --  (p2);
\draw (p1)  --  (p3);
\draw(p1)  --  (p4);
\draw[gray] (p2)  --  (p3);
\draw(p2)  --  (p4);
\draw[gray] (p3)  --  (p4);

\draw[gray]  (p11)  --  (p22);
\draw (p11)  --  (p33);
\draw(p11)  --  (p44);
\draw[gray] (p22)  --  (p33);
\draw(p22)  --  (p44);
\draw[gray] (p33)  --  (p44);

\draw(p3)  --  (p44);
\draw(p4)  --  (p33);

\path (-1.3,-1) node (p1u)  {} ;
\path (-1.3,-1.6) node (p2u)  {} ;
\path (-0.6,-0.8) node (p3u)  {} ;
\path (-0.6,-1.5) node (p4u)  {} ;

\path (1.3,-1) node (p11u)  {} ;
\path (1.3,-1.6) node (p22u)  {} ;
\path (0.6,-0.8) node (p33u)  {} ;
\path (0.6,-1.5) node (p44u)  {} ;

\draw[gray]  (p1u)  --  (p2u);
\draw (p1u)  --  (p3u);
\draw(p1u)  --  (p4u);
\draw[gray] (p2u)  --  (p3u);
\draw(p2u)  --  (p4u);
\draw[gray] (p3u)  --  (p4u);

\draw[gray]  (p11u)  --  (p22u);
\draw (p11u)  --  (p33u);
\draw(p11u)  --  (p44u);
\draw[gray] (p22u)  --  (p33u);
\draw(p22u)  --  (p44u);
\draw[gray] (p33u)  --  (p44u);

\draw(p3u)  --  (p44u);
\draw(p4u)  --  (p33u);

\path (0,0) node (c)  {} ;
\draw[gray](c)  --  (p3);
\draw[gray](c)  --  (p33);
\draw[gray](c)  --  (p3u);
\draw[gray](c)  --  (p33u);

\node [rectangle, draw=white, fill=white] (b) at (0,-2.3) {(a)};
\end{tikzpicture}
\hspace{2cm}
\begin{tikzpicture}[very thick,scale=1]
\tikzstyle{every node}=[circle, draw=black, fill=white, inner sep=0pt, minimum width=5pt];

\draw[dashed,thin] (-1.5,-1.5)  --  (1.5,1.5);

\draw[dashed,thin] (1.5,-1.5)  --  (-1.5,1.5);

\path (-0.6,1.3) node (p1)  {} ;
\path (0.6,0.9) node (p2)  {} ;
\path (-0.2,0.5) node (p3)  {} ;
\path (0.2,1.7) node (p4)  {} ;

\draw (p1)  --  (p2);
\draw[gray] (p1)  --  (p3);
\draw (p1)  --  (p4);
\draw[gray] (p2)  --  (p4);
\draw[gray] (p3)  --  (p4);
\draw (p3)  --  (p2);

\path (0.6,-1.3) node (p11)  {} ;
\path (-0.6,-0.9) node (p22)  {} ;
\path (0.2,-0.5) node (p33)  {} ;
\path (-0.2,-1.7) node (p44)  {} ;

\draw (p11)  --  (p22);
\draw[gray] (p11)  --  (p33);
\draw (p11)  --  (p44);
\draw[gray] (p22)  --  (p44);
\draw[gray] (p33)  --  (p44);
\draw (p33)  --  (p22);

\path (1.3,-0.6) node (p11l)  {} ;
\path (0.9,0.6) node (p22l)  {} ;
\path (0.5,-0.2) node (p33l)  {} ;
\path (1.7,0.2) node (p44l)  {} ;

\draw [gray](p11l)  --  (p22l);
\draw (p11l)  --  (p33l);
\draw [gray](p11l)  --  (p44l);
\draw (p22l)  --  (p44l);
\draw(p33l)  --  (p44l);

\draw [gray](p33l)  --  (p22l);

\path (-1.3,0.6) node (p11r)  {} ;
\path (-0.9,-0.6) node (p22r)  {} ;
\path (-0.5,0.2) node (p33r)  {} ;
\path (-1.7,-0.2) node (p44r)  {} ;

\draw [gray](p11r)  --  (p22r);
\draw (p11r)  --  (p33r);
\draw [gray](p11r)  --  (p44r);
\draw (p22r)  --  (p44r);
\draw (p33r)  --  (p44r);
\draw [gray](p33r)  --  (p22r);

\path (0,0) node (c)  {} ;
\draw [gray](c)  --  (p2);
\draw [gray](c)  --  (p22);
\draw (c)  --  (p22l);
\draw (c)  --  (p22r);
\draw [gray](p3)  --  (p22);
\draw [gray](p2)  --  (p33);
\draw (p33r)  --  (p22l);
\draw (p22r)  --  (p33l);
\node [rectangle, draw=white, fill=white] (b) at (0,-2.3) {(b)};
\end{tikzpicture}
\end{center}
\caption{$\mathcal{C}_{2v}$-symmetric frameworks in $(\bR^2,\|\cdot\|_\infty)$ which are not isostatic due to the existence of a $(2,2)$-tight subgraph which has no vertices or edges fixed by the reflection (corresponding to the vertical mirror line) (a) and the half-turn (b), respectively. Based on these examples it is easy to construct similar examples of non-isostatic frameworks for the groups $\mathcal{C}_{4}$ and $\mathcal{C}_{4v}$.}
\label{hcupsh1}
\end{figure}

\end{document}